\documentclass[reqno, 11pt]{amsart}
\usepackage{amsmath}
\usepackage{dsfont}
\usepackage{amsfonts}
\usepackage{amssymb}
\usepackage{commath}
\usepackage{mathrsfs}
\usepackage{enumerate}
\usepackage{comment}
\usepackage[backend=biber, style=alphabetic, url=false]{biblatex}
\usepackage[bookmarks=false,pdfstartview=FitH, pdfborder={0 0 0}, colorlinks=true,citecolor=blue, linkcolor=blue]{hyperref}
\usepackage{aliascnt} 

\theoremstyle{plain}
\newtheorem{theorem}{Theorem}[section]

\newaliascnt{conjecture}{theorem}
\newtheorem{conjecture}[conjecture]{Conjecture}
\aliascntresetthe{conjecture}

\newaliascnt{corollary}{theorem}
\newtheorem{corollary}[corollary]{Corollary}
\aliascntresetthe{corollary}

\newaliascnt{lemma}{theorem}
\newtheorem{lemma}[lemma]{Lemma}
\aliascntresetthe{lemma}

\newaliascnt{fact}{theorem}

\aliascntresetthe{fact}

\newaliascnt{claim}{theorem}

\aliascntresetthe{claim}

\newaliascnt{proposition}{theorem}

\aliascntresetthe{proposition}

\theoremstyle{definition}

\newaliascnt{definition}{theorem}
\newtheorem{definition}[definition]{Definition}
\aliascntresetthe{definition}

\newaliascnt{example}{theorem}

\aliascntresetthe{example}

\theoremstyle{remark}

\newaliascnt{remark}{theorem}
\newtheorem{remark}[remark]{Remark}
\aliascntresetthe{remark}

\newcommand{\sca}{\mathrm{Scal}}
\begin{document}
\title[Sectional-scalar Pinching]{Sharp Weitzenb\"{o}ck and PIC2 Estimates from Sectional-Scalar Curvature Pinching}
\author[J.~Ge]{Jian Ge}
\address[Ge]{School of Mathematical Sciences, Laboratory of Mathematics and Complex Systems, Beijing Normal University, Beijing 100875, P. R. China.}
\email{jge@bnu.edu.cn}
\thanks{Partially supported by NSFC 12371049 and the Fundamental Research Funds for the Central Universities}
\subjclass[2020]{Primary: 53C20; Secondary: 53C21, 53C24}
\keywords{sectional-scalar curvature pinching, Weitzenböck curvature operator, harmonic two-forms, Betti numbers, rigidity, Fubini--Study metric}
\begin{abstract}
	Let $V$ be an $n$-dimensional Euclidean vector space, ,where $n\ge 4$, and $\ell = \lfloor\frac{n}{2}\rfloor$. We prove the sharp pointwise estimate
	\[
		q_2(E) \ge -\frac{2(\ell -1)}{3\ell} \sca(E) \mathrm{Id}_{\Lambda^2V^*}
	\]
	for every algebraic curvature tensor $E$ on $V$ with nonnegative sectional curvature. Applying this estimate to the decomposition $\operatorname{Rm}_{g}=K_{\min}I+E$, we obtain the vanishing of $H^2(M; \mathbb{R})$ under a dimension-dependent strict sectional-scalar curvature pinching condition. At the weak endpoint, all harmonic two-forms are parallel. Apart from the flat case, this yields $b_2(M)=0$ in odd dimensions and $b_2(M)\le 1$ in even dimensions. At even-dimensional endpoint, $b_2(M)>0$ forces $(M, g)$ to be isometric, up to scaling, to $\mathbb{CP}^{\ell}$ with its Fubini-Study metric. As a consequence every closed five-dimensional manifold satisfying the strict pinching condition implies is a rational homology sphere. An anisotropic rescaling of the same homogeneous four-frame estimate also gives the sharp pointwise sectional-scalar pinching criterion
	\[
		K_{\min} \ge \frac{n(n-1)}{n^2-n+12}S_0 \quad \Longrightarrow \mathrm{PIC2}.
	\]
	The strict pinching places the curvature tensor in the interior of PIC2 and normalized Ricci flow brings it to a positive constant sectional curvature.
\end{abstract}
\maketitle

\section{Introduction and Main Results}

A central theme in Riemannian geometry is the interplay between topology and curvature. The classical Berger--Klingenberg, cf \cite{Ber1960a}, \cite{Kli1961}, sphere theorem states that a complete, simply connected Riemannian manifold whose sectional curvature satisfy $1<K\le 4$ is homeomorphic to a sphere. The differential sphere theorem of Brendle and Schoen \cite{BS2009a}and its weak-endpoint classification \cite{BS2009a} replace this, under pointwise quarter-pinching, by a spherical space form or a compact rank one symmetric space. Related rigidity, second Betti number, and finiteness results under positive pinching include \cite{GG1987}, \cite{FR1999}, \cite{PT1999}. This strict versus endpoint patten motivates the present sectional-scalar pinching problem.

Let $(M^{n}, g)$ be a Riemannian manifold. At $x\in M$, set
\begin{equation*}
	K_{\min}(x)=\min_{\sigma \subset T_{x}M} K(\sigma),\qquad 
	S_{0}(x)= \frac{\sca(M^{n})|_{x}}{n(n-1)}
\end{equation*}
Therefore $S_{0}$ is the average of sectional curvatures at a point. Yau asked whether the sphere theorem has an analogue with the maximum sectional curvature replaced by normalized scalar curvature \cite{Yau1993}. The standard sharp formulation is the following:
\begin{conjecture}[Yau's pinching problem]
	Let $(M^{n}, g)$ be a closed, simply connected Riemannian manifold of dimension $n\ge 4$. If
	\begin{equation*}
		K_{\min} >\frac{n-1}{n+2} S_{0},
	\end{equation*}
	pointwise, then $M^{n}$ is homeomorphic to the $n$-sphere $\mathbb{S}^{n}$.
\end{conjecture}

For even $n$, the coefficient is sharp: the Fubini-Study metric can be normalized to have $K_{\min}=1$, $K_{\max}=4$ and $\sca=n(n+2)$. Hence
\begin{equation*}
	K_{\min}=\frac{n-1}{n+2}S_{0}.
\end{equation*}
so $\mathbb{CP}^{n/2}$ lies exactly at the endpoint. Gu and Xu proved the sphere conclusion under the stronger coefficient $n(n-1)/(n^{2}-n+6)$, \cite{GX2012}. Li recently improved that coefficient to $n(n-1)/(n^{2}-n+12)$, cf. \cite{Li2026}. In dimension $4$, Yau's conjecture was first proved by Costa and Ribeiro Jr. under a weaker curvature assumption \cite{CR2014}. Li's strict coefficient agrees with Yau's hence recover the dimension $4$ case. For every dimension $n\ge 5$, Li's coefficient is larger than Yau's.

Our argument belongs to the Bochner tradition for harmonic two-forms, beginning with the classical curvature term estimate of Gallot and Meyer \cite{GM1975}. Micallef and Wang obtained strong two-form consequences from nonnegative isotropic curvature \cite{MW1993}, see also Seaman \cite{Sea1993}; modern algebraic treatments of Weitzenb\"{o}ck curvature terms include \cite{Lab2015}, \cite{BM2022}, \cite{PW2021}, \cite{NPW2023}. Wan assumed an inequality involving Ricci and sectional curvatures \cite{Wan2014}. Ni and Wilking's generalized Berger inequality for pinched flag curvature supplies the four-frame estimate used below, \cite{NW2010}. Their flag-curvature pinching is not an additional hypothesis of our theorem; only the pointwise algebraic estimate extracted from it is used.

Motivated by Yau's conjecture, we consider the dimension-dependent constant:
\[
	\beta_n=
	\begin{cases}
		\dfrac{n-1}{n+2},&n\ \text{even},\\[10pt]
		\dfrac{n(n-3)}{n^2-6},&n\ \text{odd}.
	\end{cases}
\]
For even $n$ this is Yau's coefficient; for odd $n$, it is strictly smaller than Yau's coefficient $(n-1)/(n+2)$. Our strict result is a cohomological conclusion rather than a sphere theorem.

\begin{theorem}[Strict pinching]\label{thm:Main}
	Let $(M^{n}, g)$ be a closed Riemannian manifold of dimension $n\ge 4$. If the metric satisfies:
	\begin{equation*}
		K_{\min}> \beta_{n}S_{0},
	\end{equation*}
	then
	\begin{equation*}
		H^{2}(M; \mathbb{R})=H^{n-2}(M; \mathbb{R})=0.
	\end{equation*}
\end{theorem}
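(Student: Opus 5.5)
Write $\operatorname{Rm}_g = K_{\min} I + E$ at each point, with $I$ the constant-curvature-one tensor. By definition of $K_{\min}$, the algebraic curvature tensor $E$ has nonnegative sectional curvature. Its scalar curvature is
\[
\sca(E)=\sca(g)-n(n-1)K_{\min}=n(n-1)\bigl(S_0-K_{\min}\bigr).
\]
The plan is a Bochner argument for harmonic two-forms. For a harmonic two-form $\omega$, the Weitzenb\"ock formula reads
\[
\Delta\omega=\nabla^*\nabla\omega+q_2(\operatorname{Rm}_g)\omega .
\]
Here $q_2$ is linear in the curvature tensor, so $q_2(\operatorname{Rm}_g)=K_{\min}\,q_2(I)+q_2(E)$. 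On two-forms $q_2(I)=2(n-2)\,\mathrm{Id}$, since $q_p(I)=p(n-p)\,\mathrm{Id}$. For $q_2(E)$ I invoke the sharp pointwise estimate stated in the abstract,
\[
q_2(E)\ge-\tfrac{2(\ell-1)}{3\ell}\sca(E)\,\mathrm{Id}.
\]
That estimate comes from the Ni--Wilking four-frame inequality and is taken as given here.

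Combining these gives
\[
q_2(\operatorname{Rm}_g)\ge\Bigl(2(n-2)K_{\min}-\tfrac{2(\ell-1)}{3\ell}\,n(n-1)(S_0-K_{\min})\Bigr)\mathrm{Id}.
\]
The bracket is positive exactly when
\[
K_{\min}\Bigl(2(n-2)+\tfrac{2(\ell-1)n(n-1)}{3\ell}\Bigr)>\tfrac{2(\ell-1)n(n-1)}{3\ell}S_0 ,
\]
that is, when
\[
K_{\min}>\frac{(\ell-1)n(n-1)}{3\ell(n-2)+(\ell-1)n(n-1)}\,S_0 .
\]
This threshold has to be checked against $\beta_n$.

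For $n=2\ell$ the threshold is
\[
\frac{(\ell-1)2\ell(2\ell-1)}{3\ell(2\ell-2)+(\ell-1)2\ell(2\ell-1)}
=\frac{2(2\ell-1)}{6+2(2\ell-1)}=\frac{n-1}{n+2}.
\]
For $n=2\ell+1$, substitute $\ell=(n-1)/2$. Then $\ell-1=(n-3)/2$, and the numerator becomes $\tfrac{(n-3)n(n-1)}{2}$. The denominator becomes $\tfrac{3(n-1)(n-2)}{2}+\tfrac{(n-3)n(n-1)}{2}$. Cancelling $(n-1)/2$ leaves
\[
\frac{n(n-3)}{3(n-2)+n(n-3)}=\frac{n(n-3)}{n^2-6}=\beta_n .
\]
So the strict hypothesis $K_{\min}>\beta_nS_0$ makes $q_2(\operatorname{Rm}_g)$ positive definite at every point. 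Note that $S_0>K_{\min}\ge 0$ is not needed for this: whatever the sign of $S_0-K_{\min}$, the linear inequality above is exactly what is required.

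To conclude, pair the Weitzenb\"ock formula with $\omega$ and integrate over the closed manifold $M$:
\[
0=\int_M|\nabla\omega|^2+\langle q_2(\operatorname{Rm}_g)\omega,\omega\rangle .
\]
Both terms are nonnegative and the second is pointwise positive wherever $\omega\ne0$, so $\omega\equiv0$. By Hodge theory, $H^2(M;\mathbb R)=0$.

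For $H^{n-2}$ there are two routes. If $M$ is orientable, Poincar\'e duality gives it directly, as does the Hodge star, which intertwines the Weitzenb\"ock operators on $2$-forms and $(n-2)$-forms. If $M$ is not orientable, pass to the orientable double cover, which inherits the same pointwise pinching, so its $H^2$ vanishes and hence so does its $H^{n-2}$. Real cohomology of $M$ injects into that of the cover, which gives the claim for $M$.

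The main obstacle is the algebraic estimate on $q_2(E)$, but it is taken as given from earlier in the paper. Beyond that, the only delicate points are the arithmetic matching the threshold to $\beta_n$ and the orientability issue, both of which are handled above.
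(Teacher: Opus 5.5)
Your proposal is correct and follows the paper's proof essentially step for step. You write $\operatorname{Rm}=K_{\min}I+E$, combine $q_2(I)=2(n-2)\,\mathrm{Id}$ with the sharp estimate $q_2(E)\ge-\frac{2(\ell-1)}{3\ell}\sca(E)\,\mathrm{Id}$ to get $q_2(\operatorname{Rm})>0$, kill harmonic two-forms with the integrated Weitzenb\"ock formula, and obtain $H^{n-2}$ from Poincar\'e duality or the orientation double cover. The only difference is cosmetic: you check directly that the positivity threshold equals $\beta_n$, while the paper packages the same arithmetic as the equivalence $K_{\min}>\beta_nS_0\Leftrightarrow \tau<C_nK_{\min}$ with $C_n=3\ell(n-2)/(\ell-1)$; and, as you note, the paper's preliminary observation that $K_{\min},S_0>0$ is not actually needed.
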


The strict inequality in \autoref{thm:Main} is essential. The endpoint retains substantially more information than vanishing along.

\begin{theorem}[Weak endpoint]\label{thm:WeakEnd}
	 Let $(M^{n}, g)$ be a closed connected Riemannian manifold of dimension $n\ge 4$. Suppose pointwise that
	 \begin{equation*}
	 	K_{\min} \ge \beta_{n}S_{0}.
	 \end{equation*}
	 Then either $g$ is flat, or 
	 \begin{equation*}
		 b_{2}(M)=0\quad \text{if}\ n\ \text{is odd},\qquad b_{2}(M)\le 1\quad \text{if}\ n\ \text{is even}.
	 \end{equation*}
	 Moreover, in the nonflat case, if $n=2\ell$ and $b_{2}(M)>0$, then $(M, g)$ is isometric, up to scaling, to $\mathbb{CP}^{\ell}$ equipped with the Fubini-Study metric.
\end{theorem}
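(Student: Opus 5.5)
We prove \autoref{thm:WeakEnd} with the Bochner technique for harmonic two-forms, using the algebraic estimate from the abstract: every algebraic curvature tensor $E$ with nonnegative sectional curvature satisfies
\[
	q_2(E)\ge -\frac{2(\ell-1)}{3\ell}\sca(E)\,\mathrm{Id}_{\Lambda^2V^*}.
\]
We take this as established (it is derived from the Ni--Wilking four-frame inequality). At each point decompose $\operatorname{Rm}_g=K_{\min}I+E$. Then $E$ has nonnegative sectional curvature and
\[
	\sca(E)=n(n-1)(S_0-K_{\min}).
\]
The identity part contributes $q_2(K_{\min}I)=2(n-2)K_{\min}\,\mathrm{Id}$. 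The Weitzenb\"ock term on two-forms therefore satisfies
\[
	q_2(\operatorname{Rm}_g)\ge \Bigl[2(n-2)K_{\min}-\tfrac{2(\ell-1)}{3\ell}n(n-1)(S_0-K_{\min})\Bigr]\mathrm{Id}.
\]
The first step is to check that the bracket is $\ge0$ exactly when $K_{\min}\ge\beta_nS_0$. For $n=2\ell$ this rearranges to $K_{\min}\ge\frac{n-1}{n+2}S_0$. For $n=2\ell+1$ the same computation should give $\frac{n(n-3)}{n^2-6}$; this is presumably how $\beta_n$ was chosen. Under the hypothesis we thus get $q_2\ge0$ pointwise. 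For a harmonic two-form $\omega$, integrating $\Delta\tfrac12|\omega|^2=|\nabla\omega|^2+\langle q_2\omega,\omega\rangle$ gives $\nabla\omega=0$ and $\langle q_2\omega,\omega\rangle\equiv0$. So every harmonic two-form is parallel.

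Next we separate the flat case. If $K_{\min}\equiv0$, the hypothesis forces $S_0\le0$. Since $\sca(E)\ge0$, we also have $S_0\ge K_{\min}=0$, so $S_0\equiv0$, hence $E=0$ and $g$ is flat. In the nonflat case, $\sca(E)\ge0$ gives $S_0\ge K_{\min}\ge\beta_nS_0$, so $S_0\ge0$, and then $K_{\min}\ge0$. Near any point with $S_0>0$ we get $K_{\min}>0$. Since $\omega$ is parallel and nonzero, $\langle q_2\omega,\omega\rangle=0$ holds everywhere. We then apply the equality analysis below at a point where $K_{\min}>0$.

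The main obstacle is this equality analysis. We must show that $\langle q_2(\operatorname{Rm})\omega,\omega\rangle=0$ with $K_{\min}>0$ forces $\omega$ to be, up to scale, a Hermitian form $J$ with $J^2=-\mathrm{Id}$ (in particular of full rank, which is impossible for odd $n$). In the proof of the algebraic estimate, equality should force equality in each step of the four-frame (Ni--Wilking) inequality applied to the normal form $\omega=\sum\lambda_i e_{2i-1}\wedge e_{2i}$. The estimate carries the factor $(\ell-1)/\ell$, and equality in the averaging over pairs of blocks should force all $\lambda_i$ equal and no kernel. In odd dimension the kernel direction gives a strict gap: the odd constant $\beta_n$ is strictly below the even one, so the estimate has slack there. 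This should yield $\omega=0$, hence $b_2=0$.

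For even $n$, a nonzero parallel $\omega$ is then a multiple of a parallel orthogonal almost complex structure $J$, so $(M,g)$ is K\"ahler. Two independent harmonic two-forms would give two parallel complex structures $J_1,J_2$. A suitable combination $J_1+cJ_2$ is parallel and degenerate but nonzero, which contradicts the rank statement. Hence $b_2\le1$. Finally, equality must hold pointwise in the pinching: $K_{\min}=\frac{n-1}{n+2}S_0$, together with equality in the algebraic estimate on $E$. We would show this forces $E$ to be a multiple of the curvature tensor of $\mathbb{CP}^\ell$ minus its $K_{\min}$ part, so that $g$ has constant holomorphic sectional curvature pointwise. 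By Schur's lemma for K\"ahler manifolds ($n\ge4$), the holomorphic sectional curvature is constant. The universal cover is then $\mathbb{CP}^\ell$ with a Fubini--Study metric up to scale, because the curvature is positive. The quotient must be trivial: $\mathbb{CP}^\ell$ admits no free isometric quotient preserving a K\"ahler form when $\ell$ is odd, and for $\ell$ even every isometry has a fixed point by Lefschetz. Pinning down the equality case of the Ni--Wilking-based estimate precisely is the step we expect to require the most work.
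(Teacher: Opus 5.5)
\textbf{There are two genuine gaps.}

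\emph{Odd dimension.} Your odd-dimensional step rests on a false premise: there is no slack. For $n=2\ell+1$, the constant $\beta_n$ is exactly the value at which your bracket $2(n-2)K_{\min}-\frac{2(\ell-1)}{3\ell}\sca(E)$ vanishes, as your own first computation shows. Moreover, the $q_2$ estimate is attained in odd dimensions by a form with a kernel. Concretely, let $J$ be skew with $J^2=-P_W$ and $\dim W=2\ell$, and put $R=I+cE_J$ with $c=\frac{2\ell-1}{2(\ell-1)}$. Then:
\begin{itemize}
\item $K_{\min}(R)=1$, attained on planes containing $\ker J$;
\item $\sca(cE_J)=6\ell c=\frac{3\ell(n-2)}{\ell-1}$, so $K_{\min}(R)=\beta_nS_0(R)$;
\item $\langle q_2(R)\omega_J,\omega_J\rangle=2(n-2)-4(\ell-1)c=0$.
\end{itemize}
So the pointwise equality analysis can at most show that $\omega$ has rank $n-1$ with all $|\lambda_a|$ equal. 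It can never give $\omega=0$.

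The missing ingredient is global. The kernel $L=\ker B_\omega$ is a parallel subbundle. It is nonzero because $n$ is odd, and proper because $\omega\neq0$. Since $R(X,Y)$ preserves $L^\perp$, every plane spanned by $X\in L$ and $Y\in L^\perp$ has $\mathrm{Rm}(X,Y,X,Y)=\langle R(X,Y)Y,X\rangle=0$. Hence $K_{\min}\equiv0$, the pinching forces $S_0\equiv0$, and $g$ is flat. The paper reaches the same conclusion through de Rham: nonflat implies irreducible restricted holonomy, so a nonzero parallel $B_\omega$ is invertible.

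\emph{Even dimension, $b_2\le 1$.} Your argument fails as stated, because two parallel orthogonal complex structures need not have a degenerate nonzero combination. If $J_1J_2=-J_2J_1$ (for example, left multiplication by $i$ and $j$ on $\mathbb{H}$), then $(aJ_1+bJ_2)^2=-(a^2+b^2)\mathrm{Id}$. Every nonzero combination is then again a multiple of a complex structure, and your rank statement gives no contradiction. This quaternionic case is exactly what must be excluded. The paper does so by noting that it would make the universal cover hyperk\"ahler, hence Ricci-flat, hence flat since $K\ge0$.

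Within your own framework, you could instead use the rigidity you plan to prove. At a point with $K_{\min}>0$ you would get $E=\mu E_{J_1}=\mu E_{J_2}$, with the same $\mu$ because $\sca(E_{J_i})=3n$. Then $K_{E_J}(X,Y)=3\langle JX,Y\rangle^2$ with $Y=J_1X$ forces $J_2X=\pm J_1X$ for every unit $X$, hence $J_2=\pm J_1$.

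Note that this rigidity statement, $E=\mu E_J$ at equality, including the vanishing of all components spread over three or four blocks, is the substantive algebraic content of \autoref{lem:Null} and \autoref{lem:Even}. Your proposal only announces it. Once it is available, your endgame is sound and somewhat cleaner than the paper's connected-component argument: the identity $\mathrm{Rm}=K_{\min}(I+E_J)$ holds on all of $M$ (trivially at flat points), Schur's lemma for K\"ahler manifolds then makes $K_{\min}$ a positive constant, and the quotient can be identified.
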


The flat alternative is unavoidable, because both $K_{\min}$ and $S_{0}$ vanish identically for flat metric. 

\begin{corollary}[Five-dimensional consequence]\label{cor:Berger}
	Let $(M^{5},g)$ be a closed connected Riemannian manifold. If 
	\[
		K_{\min}> \frac{10}{19}S_{0},
	\]
	then $M$ is a rational homology sphere.
\end{corollary}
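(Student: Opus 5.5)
Since $n=5$ is odd, $\beta_5=\frac{5\cdot 2}{25-6}=\frac{10}{19}$. The hypothesis $K_{\min}>\frac{10}{19}S_0$ is therefore exactly the strict pinching condition $K_{\min}>\beta_5 S_0$ of \autoref{thm:Main}, which gives
\[
H^2(M;\mathbb{R})=H^{3}(M;\mathbb{R})=0 .
\]
It remains to control $H^1$ and $H^4$. The natural route is positivity of Ricci curvature. Since $\beta_5>0$ and $K_{\min}\ge 0$ wherever $S_0\ge 0$, the first step is to show $S_0>0$ everywhere. At any point we have $K_{\min}\le S_0$, because $S_0$ is an average of sectional curvatures. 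If $S_0(x)\le 0$, the hypothesis would give $S_0(x)\ge K_{\min}(x)>\beta_5 S_0(x)$, which forces $S_0(x)>0$ when $\beta_5<1$, a contradiction. Hence $S_0>0$, so $K_{\min}>0$ pointwise, and $M$ has positive sectional curvature and in particular $\mathrm{Ric}>0$.

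By compactness, $\mathrm{Ric}\ge c>0$ for some constant $c$. Bochner's theorem then kills harmonic $1$-forms, so $b_1(M)=0$. Alternatively, Myers' theorem gives finite $\pi_1$ and hence $H^1(M;\mathbb{R})=0$.

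For $b_4$ we use Poincaré duality. If $M$ is orientable, $b_4=b_1=0$. If $M$ is not orientable, $H^4(M;\mathbb{R})\cong H_1(M;\mathbb{R}_{w})$ with twisted coefficients. To avoid this, pass to the orientable double cover $\tilde M$. It carries the pulled-back metric, which satisfies the same pinching condition, so $b_1(\tilde M)=0$ and hence $b_4(\tilde M)=0$. Real cohomology of $M$ injects into real cohomology of the finite cover $\tilde M$ via the transfer, so $b_4(M)=0$. Note also that $b_5(M)=1$ precisely when $M$ is orientable. For a non-orientable odd-dimensional manifold of positive curvature one should remark that $b_5=0$ while $b_0=1$; an Euler characteristic check $\chi=0$ is consistent with this. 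I will interpret ``rational homology sphere'' in the orientable sense, or else observe that Synge-type arguments do not directly exclude non-orientability in odd dimensions, and simply state the Betti numbers.

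The only step needing care is this orientability and $H^4$ bookkeeping. The vanishing of $H^2$ and $H^3$ is immediate from \autoref{thm:Main}, and $H^1$ is classical. Assembling the pieces gives $b_1=b_2=b_3=b_4=0$, so $M$ has the rational cohomology of $\mathbb{S}^5$ in the orientable case.
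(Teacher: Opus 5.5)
Everything up to $b_1=b_2=b_3=b_4=0$ is correct. That covers $\beta_5=10/19$, the use of \autoref{thm:Main}, the deduction $S_0>0$ and $K_{\min}>0$, $b_1=0$ via Bochner or Myers, and your transfer argument for $b_4$. The gap is \emph{orientability}. A rational homology $5$-sphere must have $H_5(M;\mathbb{Q})\cong\mathbb{Q}$, i.e.\ $M$ must be orientable, so $b_5=1$ is part of what has to be proved. You leave this open, and you justify doing so by claiming that Synge-type arguments do not exclude non-orientability in odd dimensions. That claim is false. The odd-dimensional half of Synge's theorem says precisely that a closed odd-dimensional manifold of positive sectional curvature is orientable. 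This is the step the paper uses. Once $K>0$ is established, Synge gives orientability, hence $b_0=b_5=1$. Then $b_4=b_1=0$ follows from ordinary Poincar\'e duality, so your detour through the orientation double cover is unnecessary. Reinterpreting the conclusion ``in the orientable sense,'' or just listing Betti numbers, does not prove the statement as given: a non-orientable closed $5$-manifold is never a rational homology sphere.

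Your Euler-characteristic remark is also wrong, and once corrected it would close the gap without Synge. Every closed odd-dimensional manifold, orientable or not, has $\chi(M)=0$; for instance, Poincar\'e duality with $\mathbb{Z}/2$ coefficients makes the alternating sum of mod-$2$ Betti numbers cancel. But $b_0=1$, $b_1=b_2=b_3=b_4=0$, $b_5=0$ would give $\chi(M)=1$, not $0$, so that configuration is not ``consistent.'' From the Betti numbers you already have, $\chi(M)=1-b_5=0$ forces $b_5=1$. Hence $M$ is orientable and is a rational homology sphere. You need either this argument or Synge's theorem; without one of them you have only shown $b_1=\dots=b_4=0$, which is strictly weaker than the corollary.
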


Berger's classical five-dimensional theorem assumes strict $4/23$-pinching of the sectional curvature and proves that vanishing of the second Betti number; the resulting positive curvature argument gives the same rational homology sphere conclusion \cite{Ber1963}. Note that Berger's hypothesis and our sectional-scalar pinching are incomparable even though the topological conclusion are the same.

A parameter-dependent anisotropic rescaling of the homogeneous four-frame estimate of \autoref{lem:NW} will be used to prove the main sharp estimate for the $q_{2}$ term, see \autoref{thm:SharpEstimate}. On the other hand it has a second application, logically independent of the $q_{2}$ estimate. The coefficient below is exactly the scalar $K_{\min}$ coefficient already appear in Li's Theorem 1.4 \cite{Li2026}; the point of the present argument is its sharp interpretation as the threshold forcing the full PIC2 cone, including the weak endpoint.

\begin{theorem}[Sharp criterion for PIC2]\label{thm:PIC2}
	Let $R$ be an algebraic curvature tensor on an $n$-dimensional Euclidean space $V$, $n\ge 4$, and put 
	\[
		K_{\min}(R)=\min_{\sigma\in \mathrm{Gr}_{2}(V)}K_{R}(\sigma), \qquad S_{0}(R) = \frac{\sca(R)}{n(n-1)}.
	\]
	If 
	\begin{equation}\label{eq:KPIC01}
		K_{\min}(R)\ge \frac{n(n-1)}{n^{2}-n+12} S_{0}(R),
	\end{equation}
	then $R$ lies in the PIC2 cone. If the inequality in \eqref{eq:KPIC01} is strict, then $R$ lies in the interior of the PIC2 cone.
\end{theorem}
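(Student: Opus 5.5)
The plan is to reduce to the tensor $E=R-K_{\min}(R)\,I$, where $I$ is the constant-curvature-one tensor, and to bound the PIC2 expression of $E$ from below by a multiple of $\sca(E)$, using an anisotropically rescaled version of the four-frame estimate of \autoref{lem:NW}.

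First I would dispose of degenerate cases. Since $K_{\min}(R)\le S_0(R)$ always and $c_n:=\frac{n(n-1)}{n^2-n+12}<1$, hypothesis \eqref{eq:KPIC01} forces $S_0(R)\ge 0$ and hence $K_{\min}(R)\ge 0$. If $S_0(R)=0$, then $K_{\min}\le S_0=0\le K_{\min}$, so every sectional curvature equals $0$ and $R=0$, which lies in PIC2.

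Now write $R=K_{\min}I+E$, so that $E$ has nonnegative sectional curvature. Its scalar curvature is
\[
\sca(E)=n(n-1)\bigl(S_0-K_{\min}\bigr).
\]
A direct rearrangement shows that \eqref{eq:KPIC01} is equivalent to
\[
12\,K_{\min}\ \ge\ \sca(E),
\]
with strict inequality in the strict case. Recall that $R$ is PIC2 when, for every orthonormal four-frame $e_1,\dots,e_4$ and all $\lambda,\mu\in[0,1]$,
\[
P_R(\lambda,\mu):=R_{1313}+\lambda^2R_{1414}+\mu^2R_{2323}+\lambda^2\mu^2R_{2424}-2\lambda\mu R_{1234}\ \ge 0 .
\]
For $I$ this expression equals $(1+\lambda^2)(1+\mu^2)$. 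It therefore suffices to prove the homogeneous estimate
\[
P_E(\lambda,\mu)\ \ge\ -\frac{\sca(E)}{12}\,(1+\lambda^2)(1+\mu^2)
\qquad\text{for all } E \text{ with nonnegative sectional curvature.}\tag{$\ast$}
\]
Combining $(\ast)$ with $P_R=K_{\min}(1+\lambda^2)(1+\mu^2)+P_E$ gives
\[
P_R\ \ge\ \Bigl(K_{\min}-\frac{\sca(E)}{12}\Bigr)(1+\lambda^2)(1+\mu^2)\ \ge\ 0 .
\]

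To prove $(\ast)$, I would take the homogeneous four-frame estimate of \autoref{lem:NW}. For nonnegatively curved $E$, it bounds $|E_{1234}|$ by a fixed linear combination of the sectional curvatures of the coordinate planes spanned by the four vectors. I would apply it not to $e_1,\dots,e_4$ but to a rescaled frame, for instance $(e_1,\,s e_2,\,t e_3,\,st\,e_4)$, with the parameters $s,t$ chosen in terms of $\lambda,\mu$. Because the estimate is homogeneous, it remains valid for non-unit vectors. The rescaling is chosen so that three things happen at once.
\begin{itemize}
\item The cross term becomes exactly $2\lambda\mu E_{1234}$.
\item The weights on $E_{1313},E_{1414},E_{2323},E_{2424}$ are dominated by $1,\lambda^2,\mu^2,\lambda^2\mu^2$, so these four terms can be absorbed into $P_E$.
\item The leftover sectional terms, namely $E_{1212}$ and $E_{3434}$ with weights at most of order $(1+\lambda^2)(1+\mu^2)$, appear with a negative sign.
\end{itemize}
These leftover terms are then bounded using nonnegativity of the sectional curvatures. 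The key inequality is
\[
E_{1212}+E_{3434}\ \le\ \tfrac12\,\sca(E),
\]
and more refined versions come from partial traces. This step is the one I expect to be the main obstacle: choosing $s,t$ and the convex combination in \autoref{lem:NW} so that the constant comes out to be exactly $1/12$, uniformly in $\lambda,\mu\in[0,1]$. I would first optimize the worst case $\lambda=\mu=1$, which is the PIC1/PIC-type term, and then check that monotonicity in $\lambda,\mu$ handles the remaining range.

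Sharpness is not needed for the implication itself, but it serves as a consistency check on the constant. For the strict case, the quantity $K_{\min}-\sca(E)/12$ is positive, so $P_R\ge\varepsilon(1+\lambda^2)(1+\mu^2)\ge\varepsilon$ for some $\varepsilon>0$. The same bound then holds for all small perturbations of $R$, because $P$ depends continuously on $R$ over the compact set of frames and parameters. Hence $R$ lies in the interior of the PIC2 cone.
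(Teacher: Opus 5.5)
Your reduction is correct and matches the paper's: passing to $E=R-K_{\min}I$, rewriting \eqref{eq:KPIC01} as $\sca(E)\le 12K_{\min}$, and using $P_I(\lambda,\mu)=(1+\lambda^2)(1+\mu^2)$ are all right. Your target $(\ast)$ is true, and your compactness argument for the interior is fine. You should also say why $\lambda,\mu\in[0,1]$ suffices: replacing $e_4$ by $-e_4$ flips only the sign of $E_{1234}$.

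The gap is that $(\ast)$ is the whole content of the theorem, and you do not prove it. You only list what a rescaling should achieve and flag the choice as the main obstacle. Your two concrete suggestions would not close it.
\begin{enumerate}
\item The frame $(e_1,se_2,te_3,st\,e_4)$, after dividing by $t^2$, puts weights $1,s^2,s^2,s^4$ on $E_{1313},E_{1414},E_{2323},E_{2424}$. It therefore reproduces the PIC2 weights only when $\lambda=\mu$.
\item Reducing to $\lambda=\mu=1$ by monotonicity is not a valid principle. Over all frames, $(\ast)$ at $\lambda=\mu=1$ says only that $E+\frac{\sca(E)}{12}I$ has nonnegative isotropic curvature. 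For general tensors that does not imply PIC2, so the range $\lambda\neq\mu$ needs its own argument.
\end{enumerate}

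The missing idea is to scale $e_2$ and $e_4$ independently by exactly the PIC2 weights, and then balance the pairs $\{e_1,e_2\}$ and $\{e_3,e_4\}$. For $\lambda\mu>0$, apply \eqref{eq:NW01} to $E+\varepsilon I$ with
\[
X=t^{1/4}e_1,\quad Y=t^{1/4}\mu e_2,\quad Z=t^{-1/4}e_3,\quad W=t^{-1/4}\lambda e_4,
\]
let $\varepsilon\to 0$, and take $t=\lambda/\mu$. Equivalently, apply the $\sqrt{kl}$ form of \autoref{lem:NW} to the orthogonal, non-unit frame $e_1,\mu e_2,e_3,\lambda e_4$; its proof works verbatim there. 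This gives
\[
6\lambda\mu\,|E_{1234}|\le D_{\lambda,\mu}+2\lambda\mu\,(E_{1212}+E_{3434}),\qquad D_{\lambda,\mu}=E_{1313}+\lambda^2E_{1414}+\mu^2E_{2323}+\lambda^2\mu^2E_{2424}.
\]
Using $D_{\lambda,\mu}\ge 0$ and $E_{1212}+E_{3434}\le\frac12\sca(E)$, it follows that
\[
P_E(\lambda,\mu)\ge\frac{2}{3}D_{\lambda,\mu}-\frac{2}{3}\lambda\mu(E_{1212}+E_{3434})\ge-\frac{\lambda\mu}{3}\sca(E).
\]
For $\lambda\mu=0$ one simply has $P_E=D_{\lambda,\mu}\ge 0$. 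This bound is stronger than $(\ast)$, which then follows from $4\lambda\mu\le(1+\lambda^2)(1+\mu^2)$. It also explains why $\lambda=\mu=1$ is the binding case: that is a consequence of the weighted bound, not an input obtained by monotonicity.
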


The coefficient in \autoref{thm:PIC2} is sharp among pointwise algebraic criteria of this form: For every $\gamma< \frac{n(n-1)}{n^{2}-n+12}$, there exists an algebraic curvature tensor $R$ such that $K_{\min}> \gamma S_{0}(R)$ but $R$ does not even have nonnegative isotropic curvature.

\begin{corollary}\label{cor:RicciFlow}
	Let $(M^{n}, g)$ be a closed connected Riemannian manifold of dimension $n\ge 4$. If pointwise on $M$,
	\begin{equation}\label{eq:KPIC02}
		K_{\min}> \frac{n(n-1)}{n^{2}-n+12} S_{0},
	\end{equation}
	then the normalized Ricci flow starting at $g$ exists for all time and converges smoothly to a metric of positive constant sectional curvature. Therefore, $M$ is diffeomorphic to a spherical space form. Moreover is $M$ is simply connected, then it is diffeomorphic to $\mathbb{S}^{n}$.
\end{corollary}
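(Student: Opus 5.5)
The plan is to reduce Corollary~\ref{cor:RicciFlow} to the strict case of Theorem~\ref{thm:PIC2} and then invoke the known convergence theory for Ricci flow. At each point $x\in M$ we apply Theorem~\ref{thm:PIC2} to $R=\operatorname{Rm}_g(x)$ on $V=T_xM$. The strict inequality \eqref{eq:KPIC02} then places $\operatorname{Rm}_g(x)$ in the interior of the PIC2 cone. Since $M$ is closed, the functions $K_{\min}$ and $S_0$ are continuous and the strict inequality holds on a compact set. Moreover, the interior of PIC2 is an open cone that is invariant under the orthogonal group. Together these give a uniform margin: there is $\varepsilon>0$ with $\operatorname{Rm}_g-\varepsilon\,\sca\, I\in\mathrm{PIC2}$ everywhere, after checking that $\sca>0$.

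Positivity of $\sca$ comes first. If $S_0(x)\le 0$ at some point, then \eqref{eq:KPIC02} forces $K_{\min}(x)>\frac{n(n-1)}{n^2-n+12}S_0(x)\ge S_0(x)$. This is impossible, because $S_0$ is the average of the sectional curvatures and so $S_0\ge K_{\min}$. Hence $S_0>0$ everywhere. Then $K_{\min}>0$ as well, so $g$ has strictly positive sectional curvature.

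The conclusion is the Brendle--Schoen theorem \cite{BS2009a} (Brendle's convergence theorem): if the curvature operator of a closed manifold lies in the interior of PIC2, that is, $M\times\mathbb{R}^2$ has positive isotropic curvature, then the normalized Ricci flow exists for all time and converges smoothly to a metric of constant positive sectional curvature. The limit is a quotient $\mathbb{S}^n/\Gamma$, and the flow provides a diffeomorphism $M\cong \mathbb{S}^n/\Gamma$. If $\pi_1(M)=0$, then $\Gamma$ is trivial and $M\cong\mathbb{S}^n$.

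The main work therefore sits in Theorem~\ref{thm:PIC2}, which we are allowed to assume; given it, the corollary is nearly formal. The one point needing care is that ``interior of PIC2'' in the pointwise sense matches the hypothesis of the convergence theorem, which requires strict PIC2 at every point, equivalently strictly positive complex sectional curvature on all of $T_xM\otimes\mathbb{C}$. This holds because the interior of the cone consists exactly of the tensors that are strictly positive on the compact set of normalized test configurations, and compactness of $M$ then makes this positivity uniform.
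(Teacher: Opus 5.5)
Your proposal is correct and follows the paper's proof: apply \autoref{thm:PIC2} pointwise to put $\mathrm{Rm}$ in the interior of PIC2, then invoke the Brendle--Schoen convergence theorem and the classification of positive constant-curvature manifolds. The differences are cosmetic. You obtain $K_{\min}>0$ from $K_{\min}\le S_0$ rather than by taking $\lambda=\mu=0$. You also read off $M\cong\mathbb{S}^n/\Gamma$ directly from the limit metric, instead of passing through finiteness of $\pi_1$ as the paper does; that detour is not needed.
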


No assumption on the $\pi_{1}(M)$ is needed in \autoref{cor:RicciFlow}. Li passes from the same scalar condition to a more general, unweighted four-frame hypothesis and then to positive isotropic curvature, obtaining a homeomorphism conclusion \cite[Thm1.4, Prop3.2]{Li2026}. Our proof does not use that implication. Instead, after writing $\mathrm{Rm}=K_{\min}I+E$, it retains the termwise inequality $E_{ijij}\ge 0$ and use a $(\lambda, \mu)$-dependent anisotropic rescaling of the homogeneous Ni-Wilking's estimate. This directly controls the full PIC2 family.

The paper is organized as follows. Section 2 fixes the curvature convention and introduces the model tensors. Section 3 establishes the homogeneous four-frame estimate, the sharp PIC2 criterion. Section 4 proves the sharp $q_2$ estimate together with its equality case. Section 5 applies these algebraic results to prove the global vanishing and rigidity theorems.

\section{Curvature Conventions and Model Tensors}

\begin{definition}\label{def:AlgebraicCurvatureTensor}
	Let $(V, \left\langle , \right\rangle)$ be a finite-dimensional Euclidean vector space. A multilinear map 
	\[
		E: V \times V \times V \times V \to \mathbb{R}
	\]
	is called an \emph{algebraic curvature tensor} if, for all vectors $X, Y, Z, W \in V$, it satisfies
	\begin{enumerate}
		\item $ E(X, Y, Z, W) = -E(Y, X, Z, W) $;
		\item $ E(X, Y, Z, W) = -E(X, Y, W, Z) $;
		\item $E(X, Y, Z, W) = E(Z, W, X, Y)$;
		\item $ E(X, Y, Z, W) + E(Y, Z, X, W) + E(Z, X, Y, W) = 0.$
	\end{enumerate}
\end{definition}

In components $E_{ijkl} = E(e_i, e_j, e_k, e_l)$, these identities read 
\begin{equation}\label{eq:AlgCur}
	\begin{aligned}
	E_{ijkl} = -E_{jikl}\qquad E_{ijkl} &= -E_{ijlk}\qquad E_{ijkl} = E_{klij}\\
	E_{ijkl} + E_{jkil} + E_{kijl} &= 0
	\end{aligned}
\end{equation}

For a Riemannian manifold $(M, g)$ with Levi-Civita connection $\nabla$, we distinguish the curvature endomorphism of the connection from its covariant form. Our convention is
\begin{equation*}
	R^{\nabla}(X, Y)Z = \nabla_{X}\nabla_{Y}Z-\nabla_{Y}\nabla_{X}Z-\nabla_{[X, Y]}Z,
\end{equation*}
and the Riemannian curvature tensor is the $(0, 4)$-tensor
\begin{equation*}
	\operatorname{Rm}(X, Y, Z, W) = \left\langle R^{\nabla}(X, Y)W, Z \right\rangle.
\end{equation*}

Therefore in an orthonormal basis $\{e_{i}\}$, components of the curvature tensor are given by:
\begin{equation*}
	\mathrm{Rm}_{ijkl}= \left\langle R^{\nabla}(e_{i}, e_{j})e_{l}, e_{k} \right\rangle,
\end{equation*}
and we define its contraction fiberwise by 
\begin{equation*}
	\mathrm{Ric}(X, Y) = \sum_{i} \mathrm{Rm}(X, e_{i}, Y, e_{i})
\end{equation*}
So $K(\mathrm{span}(e_{i}, e_{j}))= \mathrm{Rm}_{ijij}$ and the scalar curvature is given by:
\begin{equation*}
	\sca = 2\sum_{i<j} \mathrm{Rm}_{ijij}.
\end{equation*}

For an algebraic curvature tensor $E$ and linear independent $X, Y\in V$, define 
\[
	K_{E}(\mathrm{span}(X, Y)) = \frac{E(X,Y,X,Y)}{|X|^{2}|Y|^{2}-\left\langle X, Y \right\rangle^{2}}, ,
\] 
Its Ricci and scalar contractions are $(0,2)$-tensor and scalar, respectively,
\begin{equation*}
	\mathrm{Ric}(E)(X, Y) = \sum_{i}E(X, e_{i}, Y, e_{i}),\qquad \sca(E)=2\sum_{i<j} E_{ijij}
\end{equation*}
where the scalar curvature formula uses any orthonormal basis. We use the metric to raise and lower indices. In particular $\mathrm{Ric}(E)^{\sharp}\in \mathrm{End}(V)$is defined by 
\begin{equation*}
	\left\langle \mathrm{Ric}(E)^{\sharp}X, Y \right\rangle = \mathrm{Ric}(E)(X, Y).
\end{equation*}

Fix an orthonormal basis $\{e_{i}\}$ of $V$, let $\{e^{i}\}$ be its dual and write $e^{ij}:=e^{i}\wedge e^{j}$. The self-adjoint \emph{curvature operator}
\begin{equation*}
	\mathcal{R}_{E}: \Lambda^{2}V^{*} \to \Lambda^{2}V^{*}
\end{equation*}
is defined by 
\begin{equation*}
	\left\langle \mathcal{R}_{E}(X^{\flat}\wedge Y^{\flat}), Z^{\flat}\wedge W^{\flat} \right\rangle = E(X, Y, Z, W).
\end{equation*}
Therefore $\left\langle \mathcal{R}_{E}(e^{ij}), e^{kl} \right\rangle = E_{ijkl}$ for $i<j$ and $k<l$. Equivalently, if 
\begin{equation*}
	\omega = \frac{1}{2} \sum_{k,l}\omega_{kl}e^{k}\wedge e^{l},
\end{equation*}
then
\begin{equation*}
	(\mathcal{R}_{E}\omega)_{ij} = \frac{1}{2}\sum_{k,l}E_{ijkl}\omega_{kl}.
\end{equation*}

For $B\in \mathrm{End}(V) $, denotes its induced action on two-forms by 
\begin{equation*}
	(B^{[2]}\omega)(X, Y) = \omega(BX, Y) + \omega(X, BY).
\end{equation*}

\begin{definition}\label{def:WeitCurv}
	For an algebraic curvature tensor $E$, define its \emph{Weitzenb\"{o}ck curvature endomorphism} acting on two-forms by 
	\begin{equation}\label{eq:WeitCurv1}
		q_{2}(E):= \left( \mathrm{Ric}(E)^{\sharp} \right)^{[2]}- 2 \mathcal{R}_{E}.
	\end{equation}
\end{definition}
	Indeed, the first Bianchi identity implies that
	\begin{equation*}
		\sum_{kl}E_{ikjl}\omega_{kl}=\sum_{kl}\frac{1}{2}E_{ijkl}\omega_{kl} = \left( \mathcal{R}_{E}\omega \right)_{ij}
	\end{equation*}
	and therefore
	\begin{equation}\label{eq:Bochner}
		(q_{2}(E)\omega)_{ij}= \operatorname{Ric}_{i}^{k}\omega_{kj} + \operatorname{Ric}_{j}^{k}\omega_{ik} - 2E_{ikjl}\omega_{kl}.
	\end{equation}

For a Riemannian manifold $(M, g)$, we take $\nabla^{*}\nabla$ to be the nonnegative rough Laplacian. For a two-form $\omega\in \Omega^{2}(M)$, the Hodge Laplacian satisfies
\begin{equation}\label{eq:WZBKFormula}
	\Delta_{\mathrm{H}}\omega = \nabla^*\nabla \omega + q_{2}(\mathrm{Rm})\omega,
\end{equation}
where $q_{2}(\mathrm{Rm})$ is defined by \eqref{eq:WeitCurv1}. For background on this formula and its algebraic term, see \cite{Boc1946}, \cite{Pet2016}, \cite{NPW2023}. Both $E\to \mathcal{R}_{E}$ and $E\to q_{2}(E)$ are linear.

If $I$ denotes the constant sectional curvature one tensor, then
\begin{equation}\label{eq:CurvTensI}
	I(X, Y, Z, W)= \left\langle X, Z \right\rangle \left\langle Y,W \right\rangle-\left\langle X,W \right\rangle\left\langle Y,Z \right\rangle.
\end{equation}
In components $I_{ijkl}=\delta_{ik}\delta_{jl}-\delta_{il}\delta_{jk}$. By direct contraction, for $m\in \mathbb{R}$, we have 
\[
	q_{2}(m I) = 2(n-2) m \mathrm{Id}|_{\Lambda^{2}V^{*}}.
\]

\begin{definition}\label{def:J}
	Let $V$ be an Euclidean vector space of dimension $n$. For a skew-adjoint endomorphism $J\in \mathrm{End}(V)$, define the algebraic curvature tensor $E_{J}$ by 
	\begin{equation}\label{eq:EJ}
		\begin{aligned}
			E_{J}(X,Y,Z,W) &= \left\langle JX, Z \right\rangle\left\langle JY,W \right\rangle
			- \left\langle JX, W \right\rangle\left\langle JY,Z \right\rangle\\
				       &+2\left\langle JX, Y \right\rangle\left\langle JZ, W \right\rangle
		\end{aligned}
	\end{equation}
\end{definition}
Direct calculation verifies $E_{J}$ is indeed an algebraic curvature tensor. The following properties of $E_{J}$ can be verified easily: If $J$ is an orthogonal complex structure, then for orthonormal $X, Y$,
\begin{equation}\label{eq:JPro01}
	K_{E_{J}}(X, Y) = 3\left\langle JX, Y \right\rangle^{2},\quad \operatorname{Ric}(E_{J}) = 3g,\quad \sca(E_{J}) = 3n.
\end{equation}
More generally, suppose that $W\subset V$ is a $J$-invariant subspace and that $J^{2}=-P_{W}$, where $P_{W}$ is the orthogonal projection onto $W$. Then
\begin{equation}\label{eq:JPro02}
	\operatorname{Ric}(E_{J})(X, Y) = 3 \left\langle P_{W}X, Y \right\rangle,\quad \sca(E_{J}) = 3 \dim W.
\end{equation}
These identities will be used in the discussion of equality cases.

\section{Four-Frame Estimates and the Sharp PIC2 Criterion}
In this section, our main goal is to prove a sharp estimate for the Weitzenb\"{o}ck curvature term $q_{2}(E)$. 

Since the pinching condition is given in terms of sectional curvatures, we need to control the off-diagonal component of $E$. First, we work on a fixed four-dimensional subspace of $\mathbb{R}^{n}$. The key input is the generalized Berger inequality of Ni-Wilking \cite{NW2010}, Corollary 2.2. Their statement is homogeneous and applies to four mutually orthogonal vectors; it imposes no unit length normalization.

\begin{lemma}\label{lem:NW}
	Let $E$ be an algebraic curvature tensor with nonnegative sectional curvature. Put 
	\begin{equation*}
		k=E_{1212}, \qquad l=E_{3434}, \qquad r=E_{1234},
	\end{equation*}
	and
	\begin{equation*}
		A=E_{1313}+E_{1414}+E_{2323}+E_{2424}.
	\end{equation*}
	Then
	\begin{equation}\label{eq:NW00}
		6|r|\le A+ 4\sqrt{kl} .
	\end{equation}
\end{lemma}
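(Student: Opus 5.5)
The plan is to prove \eqref{eq:NW00} directly, in the spirit of Berger's classical argument, by evaluating the nonnegative sectional curvature of $E$ on well-chosen planes in $\mathrm{span}(e_1,e_2,e_3,e_4)$. Averaging over sign flips will isolate $k$, $l$, the mixed diagonal terms entering $A$, and the off-diagonal components. The first Bianchi identity will then combine those off-diagonal components into a multiple of $r$. We only use that $E(X,Y,X,Y)\ge 0$ for all $X,Y$; no normalization of lengths is needed.

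\textbf{Step 1: first family of planes.} For a parameter $t>0$ put
\[
X_\pm=e_1\pm t e_3,\qquad Y_\pm=e_2\pm t e_4 ,
\]
with the two signs chosen equal. Expand $E(X_\pm,Y_\pm,X_\pm,Y_\pm)\ge0$ and add the two sign choices. Terms odd in exactly one of the $t$'s (such as $t\,E_{1232}$) cancel. What survives is
\[
2\bigl(k+t^4 l+t^2(E_{1414}+E_{2323})\bigr)+4t^2\bigl(E_{1234}+E_{3214}\bigr)\ge 0 .
\]
Using the pair symmetry, $E_{3214}=E_{1432}=-E_{1423}$. Replacing $t e_4$ by $-t e_4$ gives the same inequality with the sign of the off-diagonal bracket reversed.

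\textbf{Step 2: second family.} Repeat Step 1 with the roles of $e_3,e_4$ exchanged, i.e.\ with $X=e_1\pm te_4$ and $Y=e_2\mp te_3$. This gives
\[
2\bigl(k+t^4l+t^2(E_{1313}+E_{2424})\bigr)+4t^2\cdot(\text{a combination of }E_{1243},E_{1324})\ge 0 ,
\]
again with either overall sign available for the off-diagonal bracket.

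\textbf{Step 3: Bianchi combination.} Add the two inequalities, choosing the signs so that the off-diagonal brackets combine to
\[
2E_{1234}-E_{1423}-E_{1342}.
\]
By the first Bianchi identity $E_{1234}+E_{1342}+E_{1423}=0$, this combination equals $3E_{1234}=3r$. The result is
\[
4(k+t^4l)+2t^2A \ge \pm 12\,t^2 r .
\]

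\textbf{Step 4: optimize.} Divide by $2t^2$:
\[
6|r|\le A+2\bigl(k t^{-2}+t^{2}l\bigr).
\]
If $kl>0$, choose $t^2=\sqrt{k/l}$, which gives exactly $6|r|\le A+4\sqrt{kl}$. If $k=0$ or $l=0$, let $t\to\infty$ or $t\to0$ respectively; the bracket tends to $0=4\sqrt{kl}$, so the inequality still holds.

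\textbf{Main obstacle.} The delicate point is the sign bookkeeping in Steps 2 and 3. One must check carefully, using the antisymmetries and the pair symmetry, that the two families can be signed so that the off-diagonal parts add up to the full Bianchi combination $3r$ rather than cancelling. I would verify this first by writing each cross term in the normal form $E_{1abc}$ before summing. If my guessed second family does not produce the right combination, the fallback is to use $X=e_1+te_4$, $Y=e_2+te_3$ with independent signs, and solve a small linear system for the signs.
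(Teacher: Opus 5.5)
Your proof is correct, but it takes a genuinely different route from the paper's.

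\textbf{How the paper argues.} The paper computes nothing directly. It cites Ni--Wilking's generalized Berger inequality for $\lambda$-pinched flag curvature. To use it, the paper
\begin{enumerate}
\item perturbs $E$ to $E+\varepsilon I$ to manufacture positive pinching;
\item applies the cited estimate to the rescaled frame $t^{1/4}e_1,\,t^{1/4}e_2,\,t^{-1/4}e_3,\,t^{-1/4}e_4$;
\item discards the factor $(1+\lambda_\varepsilon)/(1-\lambda_\varepsilon)\ge 1$;
\item lets $\varepsilon\to 0$ and then optimizes in $t$.
\end{enumerate}

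\textbf{How you argue.} You prove the needed $\lambda=0$ case from scratch by Berger's polarization. You sum $E(U,V,U,V)\ge 0$ over the planes spanned by $e_1\pm te_3,\ e_2\pm te_4$ (equal signs) and by $e_1\pm te_4,\ e_2\mp te_3$. This kills every term odd in exactly one parameter. The cyclic identity $E_{1234}+E_{1342}+E_{1423}=0$ then leaves $4(k+t^4l)+2t^2A+12t^2r\ge 0$, and the complementary sign choices give the same bound with $-12t^2r$.

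\textbf{Your flagged obstacle is not a problem.} The Step~2 bracket is $E_{1243}+E_{1342}$. Your opposite-sign family contributes
\[
-4t^2(E_{1243}+E_{1342})=4t^2(E_{1234}-E_{1342}),
\]
which is exactly what Step~3 needs. So your guessed family works and no fallback is required.

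\textbf{What each route buys.} Your route is self-contained: no external citation, no perturbation or limit, and no reliance on the claim that the cited estimate applies to non-unit orthogonal vectors. Your underlying identity holds for arbitrary $X,Y,Z,W$. Running the same computation with $X=t^{1/4}e_1$, $Y=t^{1/4}\mu e_2$, $Z=t^{-1/4}e_3$, $W=t^{-1/4}\lambda e_4$ also yields the weighted estimate \eqref{eq:thmPIC04} used for the PIC2 criterion. The paper's route buys only brevity, plus a stronger pinched constant that it then throws away.

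\textbf{One slip in Step~4.} Your limits are reversed. If $k=0$ the bracket is $t^2 l$, so you need $t\to 0$. If $l=0$ it is $kt^{-2}$, so you need $t\to\infty$. The conclusion $\inf_{t>0}\bigl(kt^{-2}+lt^2\bigr)=2\sqrt{kl}$ is unaffected.
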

\begin{proof}
	For an algebraic curvature tensor $F$ and a unit vector $X$, the flag curvature form of $F$ is the symmetric form $F_{X}(Y, Z)=F(X,Y,X,Z)$ on $X^{\perp}$. We say that $F$ has $\lambda$-pinched flag curvature if the least and greatest eigenvalues of every such form satisfy $\mu_{\min}\ge \lambda \mu_{\max}$. The pointwise estimate in \cite{NW2010} states that if 
$F$ has nonnegative sectional curvature and $\lambda$-pinched flag curvature, then any four mutually orthogonal vectors $X, Y, Z, W$ satisfy
	\begin{equation}\label{eq:NW01}
	\begin{aligned}
		6\frac{1+\lambda}{1-\lambda}|F(X,Y,Z,W)| \leq{} & K_F(X,Z)+K_F(Y,Z)+K_F(X,W)\\
								&+K_F(Y,W)+2K_F(X,Y)+2K_F(Z,W),
	\end{aligned}
	\end{equation}
	where $K_{F}(U, V)=F(U, V, U, V)$. Their proof is pointwise and algebraic, so the estimate applies here. For $\varepsilon>0$ small, we set 
	\begin{equation}\label{eq:NW02}
		K_{\max}(E)=\max_{\sigma\in \mathrm{Gr}_{2}(\mathbb{R}^{n})} K_{E}(\sigma), \qquad E_{\varepsilon}=E+ \varepsilon I.
	\end{equation}
	The eigenvalues of every flag curvature form of $E_{\varepsilon}$ lie in the interval 
	\[
		[\varepsilon , K_{\max}(E)+\varepsilon].
	\]
	Consequently, $E_{\varepsilon}$ has positive $\lambda_{\varepsilon}$-pinched flag curvature, for example with
	\begin{equation*}
		\lambda_{\varepsilon} = \frac{\varepsilon}{K_{\max}(E)+2\varepsilon}\in (0, 1).
	\end{equation*}
	For $t>0$, apply \eqref{eq:NW01} to 
	\begin{equation*}
		X=t^{1/4}e_{1}, \qquad Y= t^{1/4}e_{2},\qquad Z= t^{-1/4}e_{3},\qquad W= t^{-1/4}e_{4},
	\end{equation*}
	Since $I(e_{1}, e_{2}, e_{3}, e_{4})=0$ and $(1+\lambda_{\varepsilon})/(1-\lambda_{\varepsilon})\ge 1$, this gives 
	\begin{equation*}
		6|r| \le A+4\varepsilon +2t (k+\varepsilon) + 2 \frac{l+\varepsilon}{t}.
	\end{equation*}
	Letting $\varepsilon\to 0$ yields:
	\begin{equation*}
		6|r| \le A+2tk+2 \frac{l}{t}, \qquad (t>0).
	\end{equation*}
	Taking the infimum and using $\inf_{t>0}(2tk+\frac{2l}{t})=4\sqrt{kl}$ proves \eqref{eq:NW00}.
\end{proof}

For an algebraic curvature tensor $E$ and an orthonormal four-frame $e_{1}, e_{2}, e_{3}, e_{4}$, define:
\begin{equation}\label{eq:thmPIC01}
	\mathcal{I}_{\lambda, \mu}(E) = E_{1313} + \lambda^{2}E_{1414} + \mu^{2}E_{2323} + \lambda^{2}\mu^{2}E_{2424} -2\lambda\mu E_{1234}.
\end{equation}
The tensor $E$ lies in PIC2 cone if $\mathcal{I}_{\lambda, \mu}(E) \ge 0$ for every orthonormal four-frame and every $\lambda, \mu\in [-1, 1]$; it lies in the interior of PIC2 if all these inequalities are strict.

\begin{proof}[Proof of \autoref{thm:PIC2}]
	For simplicity we set $E=R-K_{\min}I$, $\tau = \sca(E)$. Then the tensor $E$ has nonnegative sectional curvature, and 
	\begin{equation*}
		\sca(E) = n(n-1)(S_{0}(R) - K_{\min}).
	\end{equation*}
	Therefore, the pinching condition \eqref{eq:KPIC01} is equivalent to 
	\begin{equation}\label{eq:thmPIC02}
		\tau \le 12K_{\min}.
	\end{equation}
	Since $\tau\ge 0$, this gives $K_{\min}\ge 0$, under the strict pinching one has $\tau < 12 K_{\min}$ and hence $K_{\min} >0$. Changing the signs of $\lambda$ and $\mu$ can be absorbed by reversing one vector of the four-frame: the diagonal terms depend only on the squares of the parameters, while such a reversal changes the sign of $E_{1234}$. Hence it suffices to treat  $0\le \lambda, \mu \le 1$. The diagonal term in \eqref{eq:thmPIC01} depend only on the square of the parameters; if $\lambda\mu < 0$, reversing one vector of the four-frame changes the sign of $R_{1234}$ without changing any diagonal term. Fix such a four-frame, extend it to an orthonormal basis. Put $k=E_{1212}$, $l=E_{3434}$, $r=E_{1234}$, and
	\begin{equation*}
		D_{\lambda, \mu}:= E_{1313}+\lambda^{2}E_{1414} + \mu^{2} E_{2323} +\lambda^{2}\mu^{2}E_{2424}.
	\end{equation*}
	Nonnegative sectional curvature implies that
	\begin{equation}\label{eq:thmPIC03}
		\tau = 2\sum_{i<j}E_{ijij} \ge 2(k+l).
	\end{equation}
	Suppose first that $\lambda \mu >0$. For $E_{\varepsilon}$ and $\lambda_{\varepsilon}$ as in the proof of \autoref{lem:NW} and apply \eqref{eq:NW01}, with $F=E_{\varepsilon}$, to 
	\begin{equation*}
		X=t^{1/4}e_{1},\qquad Y=t^{1/4}\mu e_{2}, \qquad Z=t^{-1/4}e_{3},\qquad W= t^{-1/4}\lambda e_{4}
	\end{equation*}
	where $t>0$. We have
	\begin{equation}\label{eq:thmPIC07}
		\begin{aligned}
		6 \frac{1+\lambda_{\varepsilon}}{1-\lambda_{\varepsilon}}\lambda\mu |r| &\le D_{\lambda, \mu} + \varepsilon(1+\lambda^2)(1+\mu^2)\\
		&+2t\mu^2	(k+\varepsilon) + 2 \frac{\lambda^2}{t}(l+\varepsilon)
		\end{aligned}
	\end{equation}
	Dropping $(1+\lambda_{\varepsilon})/(1-\lambda_{\varepsilon})\ge 1$ and letting $\varepsilon\to 0$ gives
	\begin{equation*}
		6\lambda \mu |r| \le D_{\lambda, \mu} + 2t \mu^{2} k + 2 \frac{\lambda^{2}}{t} l, \qquad t>0.
	\end{equation*}
	Chooseing $t=\lambda/\mu$ balances the two parameter weights and gives:
	\begin{equation}\label{eq:thmPIC04}
		6\lambda \mu |r| \le D_{\lambda, \mu} + 2\lambda \mu (k +l).
	\end{equation}
	Therefore by \eqref{eq:thmPIC03},
	\begin{equation}\label{eq:thmPIC05}
		\begin{aligned}
			\mathcal{I}_{\lambda, \mu}(E) &= D_{\lambda, \mu} - 2\lambda \mu r\\
						      &\ge \frac{2}{3}D_{\lambda, \mu} - \frac{2}{3}\lambda \mu (k+l)\\
						      &\ge \frac{2}{3}D_{\lambda, \mu} - \frac{\lambda\mu}{3}\tau \\
						      &\ge -\frac{\lambda\mu}{3}\tau 
		\end{aligned}
	\end{equation}
	If $\lambda\mu =0$, then $\mathcal{I}_{\lambda, \mu}(E) = D_{\lambda,\mu} \ge 0$, so \eqref{eq:thmPIC05} remains valid. Since $\mathcal{I}_{\lambda, \mu}(K_{\min}I) = K_{\min}(1+\lambda^{2})(1+\mu^{2})$, we obtain
	\begin{equation}\label{eq:thmPIC06}
		\begin{aligned}
		\mathcal{I}_{\lambda, \mu} (R) & \ge K_{\min}(1+\lambda^{2})(1+\mu^{2}) - \frac{\lambda \mu}{3}\tau\\
					       &= K_{\min}\left( (1-\lambda\mu)^{2} + (\lambda-\mu)^{2} \right) + \frac{\lambda \mu}{3}(12K_{\min} - \tau) \ge 0.
		\end{aligned}
	\end{equation}
	This proves weak PIC2 assertion. Under strict pinching, the last quantity is positive when $\lambda\mu >0$ since $12K_{\min} > \tau$. When $\lambda \mu =0$, it is still positive since $K_{\min}>0$. Thus the strict pinching implies the interior of PIC2. 
	
	It remains to show sharpness. Fix a four dimensional subspace $W\subset V$ with orthonormal basis $e_{1}, e_{2}, e_{3}, e_{4}$ and define \[Je_{1}=e_{2}, Je_{2}=-e_{1}, Je_{3}=e_{4}, Je_{4}=-e_{3},\quad J|_{W^{\perp}}=0.\]
	Using \autoref{def:J}, and for $t>0$ set	
	\begin{equation}\label{eq:ABS}
		R_{t}= I+tE_{J}.
	\end{equation}
	For orthonormal $X, Y$, one has
	\begin{equation*}
		K_{R_{t}} (\mathrm{span}(X, Y)) = 1+ 3t\left\langle JX, Y \right\rangle^{2}.
	\end{equation*}
	Therefore $K_{\min}(R_{t})=1$. A direct calculation shows
	\begin{equation*}
		\sca(R_{t}) = n(n-1) + 12 t,\qquad S_{0}(R_{t}) = 1 + \frac{12t}{n(n-1)}
	\end{equation*}	
	On the given four-frame we calculate
	\begin{equation*}
		\mathcal{I}_{1,1}(R_{t}) = 4(1-t).
	\end{equation*}
	For $t=1$,
	\begin{equation*}
		\frac{K_{\min}(R_{1})}{\sca_{0}(R_1)}=\frac{n(n-1)}{n(n-1)+12},\qquad \mathcal{I}_{1,1}(R_{1}) = 0,
	\end{equation*}
	so the endpoint can lie on the boundary of PIC2. Given any $0<\gamma < n(n-1)/(n(n-1)+12)$, choose $t>1$ sufficiently close to $1$ that $n(n-1)/(n(n-1)+12t) >\gamma$.
	Then
	\begin{equation*}
		K_{\min}(R_{t}) > \gamma S_{0}(R_{t}),\qquad \mathcal{I}_{1,1}(R_{t})<0.
	\end{equation*}
	Therefore $R_{t}$ does not even have nonnegative isotropic curvature, proving the claimed optimality in every dimension $n\ge 4$.
\end{proof}

\begin{proof}[Proof of \autoref{cor:RicciFlow}]
	Apply \autoref{thm:PIC2} to $\mathrm{Rm}_x$ at every point $x\in M$. The strict hypothesis places the curvature tensor in the interior of PIC2, so the convergence theorem of Brendle-Schoen \cite{BS2009a} applies here: the normalized Ricci flow exists for all time and converges smoothly to a metric of constant sectional curvature. Note that taking $\lambda=\mu=0$ in \eqref{eq:thmPIC01} shows that the initial metric has positive sectional curvature. Hence the fundamental group is finite, hence its universal cover if compact. The remaining conclusions follow from the classification of complete positive constant-curvature manifolds.
\end{proof}

\section{The Sharp Two-Form Weitzenb\"{o}ck Estimate}
Set $\ell = \lfloor\frac{n}{2}\rfloor$, so $n=2\ell$ or $n=2\ell +1$.

\begin{theorem}[Sharp $q_{2}$ estimate]\label{thm:SharpEstimate}
	Let $E$ be an algebraic curvature tensor on $\mathbb{R}^{n}$, $n\ge 4$, with nonnegative sectional curvature. As a quadratic form inequality on $\Lambda^{2}(\mathbb{R}^{n})^{*}$, 
	\begin{equation}\label{eq:SharpEstimate01}
		q_{2}(E) \ge -\frac{2(\ell -1)}{3\ell} \sca(E) \mathrm{Id}.
	\end{equation}
	The coefficient is sharp in every dimension.
\end{theorem}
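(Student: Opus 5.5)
The plan is to diagonalize the two-form and reduce \eqref{eq:SharpEstimate01} to a sum of four-frame contributions, each of which is controlled by \autoref{lem:NW}. Both sides of \eqref{eq:SharpEstimate01} are $O(n)$-invariant, so fix $\omega\in\Lambda^{2}V^{*}$ and choose an orthonormal basis in which $\omega=\sum_{i=1}^{\ell}a_{i}e^{2i-1}\wedge e^{2i}$. Then $|\omega|^{2}=\sum a_{i}^{2}$, and for odd $n$ the last vector $e_{n}$ is unpaired. Write $P_{i}=\mathrm{span}(e_{2i-1},e_{2i})$ and set
\begin{equation*}
k_{i}=E_{2i-1,2i,2i-1,2i},\qquad r_{ij}=E_{2i-1,2i,2j-1,2j},
\end{equation*}
and let $A_{ij}\ge 0$ be the sum of the four mixed sectional curvatures $E_{abab}$ with $a\in\{2i-1,2i\}$ and $b\in\{2j-1,2j\}$.

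The first step is to compute $\langle q_{2}(E)\omega,\omega\rangle$ from \eqref{eq:Bochner}. The diagonal part gives $a_{i}^{2}(\mathrm{Ric}_{2i-1,2i-1}+\mathrm{Ric}_{2i,2i}-2k_{i})$. This equals $a_{i}^{2}\bigl(\sum_{j\ne i}A_{ij}+E_{2i-1,n,2i-1,n}+E_{2i,n,2i,n}\bigr)$, where the last two terms appear only for odd $n$ and are nonnegative. The off-diagonal part is a multiple $-c\,a_{i}a_{j}r_{ij}$ for each pair $i<j$. I expect $c=4$ for the normalization $|\omega|^{2}=\sum a_{i}^{2}$ used here, and I will check this constant against the model case below. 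Dropping the nonnegative odd-dimensional terms, this gives
\begin{equation*}
\langle q_{2}(E)\omega,\omega\rangle\ \ge\ \sum_{i<j}\Bigl[(a_{i}^{2}+a_{j}^{2})A_{ij}-4a_{i}a_{j}r_{ij}\Bigr].
\end{equation*}

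The second step estimates each bracket. Applying \autoref{lem:NW} to the four-frame $e_{2i-1},e_{2i},e_{2j-1},e_{2j}$ gives $6|r_{ij}|\le A_{ij}+4\sqrt{k_{i}k_{j}}$. Combining this with $a_{i}^{2}+a_{j}^{2}\ge 2|a_{i}a_{j}|$, each bracket is at least
\begin{equation*}
\tfrac{4}{3}|a_{i}a_{j}|A_{ij}-\tfrac{8}{3}|a_{i}a_{j}|\sqrt{k_{i}k_{j}}\ \ge\ -\tfrac{8}{3}b_{i}b_{j},\qquad b_{i}:=|a_{i}|\sqrt{k_{i}}.
\end{equation*}
Next I use the elementary inequality $\sum_{i<j}b_{i}b_{j}\le\frac{\ell-1}{2\ell}\bigl(\sum_{i}b_{i}\bigr)^{2}$, which holds because $(\sum b_{i})^{2}\le\ell\sum b_{i}^{2}$. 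By Cauchy--Schwarz, $(\sum b_{i})^{2}\le(\sum a_{i}^{2})(\sum k_{i})$. Since all sectional curvatures are nonnegative, $\sum_{i}k_{i}\le\tau/2$ with $\tau=\sca(E)$. Together these give
\begin{equation*}
\langle q_{2}(E)\omega,\omega\rangle\ \ge\ -\frac{8}{3}\cdot\frac{\ell-1}{2\ell}\cdot\frac{\tau}{2}\,|\omega|^{2}=-\frac{2(\ell-1)}{3\ell}\,\tau\,|\omega|^{2}.
\end{equation*}
This is exactly \eqref{eq:SharpEstimate01}, which supports the guess $c=4$.

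For sharpness I take $E=E_{J}$ from \autoref{def:J}. Here $J$ is a complex structure on a $2\ell$-dimensional subspace $W$, with $J=0$ on $W^{\perp}$ when $n$ is odd, and I take $\omega(X,Y)=\langle JX,Y\rangle$, the Kähler form of $W$. By \eqref{eq:JPro01} and \eqref{eq:JPro02}, $E_{J}$ has nonnegative sectional curvature and $\sca(E_{J})=6\ell$, so the claimed bound equals $-4(\ell-1)$. Computing $q_{2}(E_{J})\omega$ directly, $\mathrm{Ric}^{[2]}$ contributes $6\omega$. The term $2\langle JX,Y\rangle\langle JZ,W\rangle$ in $E_{J}$ contributes $-2\cdot 2\ell\,\omega$, and the other two terms contribute $-2\omega$. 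The total is $(6-2-4\ell)\omega=-4(\ell-1)\omega$, so equality holds. The same computation pins down the cross-term constant $c$.

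The main obstacle is the bookkeeping in the first step: getting the exact constant in front of $a_{i}a_{j}r_{ij}$ from \eqref{eq:Bochner} with the stated conventions. The argument only closes if that constant equals $4$, since the Ni--Wilking factor $\tfrac{2}{3}$ and the averaging constant $\tfrac{\ell-1}{2\ell}$ must combine to exactly $\tfrac{2(\ell-1)}{3\ell}$. I would fix the constant first by the $E_{J}$ computation and then carry out the general expansion.
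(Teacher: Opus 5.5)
Your proposal is correct and follows the paper's proof essentially step for step: the same block normal form for $\omega$, the same expansion \eqref{eq:SharpEstimate2.5}, the pairwise application of \autoref{lem:NW}, the same averaging and Cauchy--Schwarz chain with $\sum_i k_i\le \sca(E)/2$, and the same $E_J$ model for sharpness. The cross-term constant you were unsure about is indeed $c=4$, and you can obtain it directly instead of calibrating against the model: $\langle \mathcal{R}_E\omega,\omega\rangle=\sum_{i}a_i^2k_i+2\sum_{i<j}a_ia_jr_{ij}$, and $q_2(E)$ contains $-2\mathcal{R}_E$.
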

\begin{proof}
	Let $\omega \in \Lambda^{2}(\mathbb{R}^{n})^{*}$ be a unit two-form. Choose an orthonormal basis 
	\begin{equation*}
		\left\{ e_{1}, \dots, e_{n} \right\}\qquad \text{with dual basis}\qquad \left\{ e^{1}, \dots, e^{n} \right\},
	\end{equation*}
	such that $\omega$ can be written as 
	\begin{equation}\label{eq:SharpEstimate02}
		\omega = \sum_{a=1}^{\ell} \lambda_{a}e^{2a-1}\wedge e^{2a},\quad \sum_{a=1}^{\ell} \lambda_{a}^{2}=1.
	\end{equation}
	If $n$ is odd, $e_{n}$ is unused. For $1\le a \le \ell$, let $\sigma_{a} = \mathrm{span}\{e_{2a-1} e_{2a}\}$ and set
	\begin{equation*}
		k_{a} := E_{2a-1, 2a, 2a-1, 2a}.
	\end{equation*}
	For $a<b$, let $A_{ab}$ be the sum of the four coordinate sectional curvatures having direction from $\sigma_{a}$ and one form $\sigma_{b}$, and put the off-diagonal term:
	\begin{equation*}
		r_{ab} := E_{2a-1, 2a, 2b-1, 2b}.
	\end{equation*}
	Expandding \eqref{eq:Bochner} and writing the Ricci terms as sums of sectional curvatures, gives the following:
	\begin{equation}\label{eq:SharpEstimate2.5}
		\begin{aligned}
			\left\langle q_{2}(E)\omega, \omega \right\rangle 
			= &\sum_{a<b}\left[ (\lambda_{a}^{2}+\lambda_{b}^{2})A_{ab} - 4\lambda_{a}\lambda_{b}r_{ab} \right]\\
			  &+ \frac{1-(-1)^{n}}{2}\sum_{a} \lambda_{a}^{2} (E_{2a-1, n, 2a-1, n} + E_{2a, n, 2a, n}).
		\end{aligned}
	\end{equation}
	The last line is nonnegative. Applying \autoref{lem:NW} to each pair of blocks gives:
	\begin{equation}\label{eq:SharpEstimate03}
	\begin{aligned}
	&(\lambda_a^2+\lambda_b^2)A_{ab}-4\lambda_a\lambda_b r_{ab}\\
	&\quad\geq \left(\lambda_{a}^2+ \lambda_{b}^2-\frac{2}{3} |\lambda_{a}\lambda_{b}|\right)A_{ab} -\frac{8}{3} |\lambda_{a}\lambda_{b}|\sqrt{k_ak_b}\\
	&\quad\geq-\frac{8}{3} |\lambda_{a}\lambda_{b}|\sqrt{k_ak_b}.
	\end{aligned}
	\end{equation}
	Apply Cauchy-Schwarz we have
	\begin{equation}\label{eq:SharpEstimate04}
	\begin{aligned}
		\sum_{a<b} |\lambda_{a}|\sqrt{k_{a}} |\lambda_{b}|\sqrt{k_{b}} 
		&\le \frac{\ell -1}{2\ell} \left( \sum_{a} |\lambda_{a}|\sqrt{k_{a}} \right)^{2}\\
		&\le \frac{\ell -1}{2\ell}\left( \sum_{a}\lambda_{a}^{2} \right)\left( \sum_{a}k_{a} \right)\\
		&\le \frac{\ell -1}{4\ell}\sca(E).
	\end{aligned}
	\end{equation}
	Here we have used $\sum_{a} \lambda_{a}^{2}=1$ and $\sum_{a} k_{a} \le \sca/2$. Plug it back to \eqref{eq:SharpEstimate03}, we get desired inequality. 

	It remains to show the estimate is sharp. The model is the curvature tensor of $\mathbb{CP}^{\ell}$ (after removing the identity curvature contribution). Let $W\subset \mathbb{R}^{n}$ be a $2\ell$-dimensional subspace, choose an orthogonal complex structure $J_{W}\in \mathrm{End}(W)$, and set $J=J_{W}\oplus 0_{W^{\perp}}\in \mathrm{End}(\mathbb{R}^{n}).$ If $P_{W}$ is the orthogonal projection onto $W$, then 
	\begin{equation*}
		J^{*}=-J,\qquad J^{2}=-P_{W}, 
	\end{equation*}
	where $J^{*}$ denotes the adjoint of the endomorphism $J$. Note that $J$ is a complex structure on $\mathbb{R}^{n}$ if and only if $n=2\ell$ is even. When $n$ is odd, it is a rank $2\ell$ skew-adjoint endomorphism. Choose an adapted orthonormal basis such that
	\begin{equation*}
		Je_{2a-1}=e_{2a},\quad Je_{2a}=-e_{2a-1}\qquad (1\le a\le \ell),
	\end{equation*}
	and $Je_{n}=0$ when $n$ is odd. Use the algebraic curvature tensor defined in \autoref{def:J}. For orthonormal $X, Y$
	\begin{equation*}
		K_{E_{J}}(X, Y) = 3 \left\langle JX, Y \right\rangle^{2} \ge 0.
	\end{equation*}
	Define the associated two-form by $\Omega_{J}(X, Y)=\left\langle JX, Y \right\rangle$. In the adapted coframe,
	\begin{equation*}
		\Omega_{J}=\sum_{a=1}^{\ell} e^{2a-1}\wedge e^{2a}, \qquad \omega_{J}=\ell^{-1/2}\Omega_{J}.
	\end{equation*}
	Here $|\Omega_{J}|^{2}=\ell$, so $\omega_{J}$ is of unit norm. For $a<b$ the relevant curvature quantities are
	\[
		k_{a}=k_{b}=3,\qquad A_{ab}=0, \qquad  r_{ab}=2.
	\]
	Moreover by \eqref{eq:JPro01},
	\begin{equation*}
		\mathrm{Ric}(E_{J})(X, Y)=3\left\langle P_{W}X, P_{W}Y \right\rangle, \qquad  \sca(E_{J})=6\ell,
	\end{equation*}
	Therefore the scalar curvature is $3n$ when $n$ is even and $3(n-1)$ when $n$ is odd. The unused direction contributes zero in the odd case. Substituting these values into \eqref{eq:SharpEstimate2.5}, we obtain
	\begin{equation*}
		\left\langle q_{2}(E_{J})\omega_{J}, \omega_{J} \right\rangle  = - 4 (\ell -1).
	\end{equation*}
	This equals $-\frac{2(\ell-1)}{3\ell}\sca(E_{J})$, so the bound is sharp in every dimension.
\end{proof}

For the endpoint case, we need the following two lemmas 
\begin{lemma}\label{lem:Null}
	Let $E$ be an algebraic curvature tensor with nonnegative sectional curvature. 
	\begin{enumerate}
	\item If $E_{1212}=0$, then $E_{121j}=E_{122j}=0$, for $j\ge 3$.
	\item Suppose $\dim V=4$ and $e_{1}, e_{2}, e_{3}, e_{4}$ is an orientated orthonormal basis. Let $J$ be the orthogonal complex structure defined by $Je_{1}=e_{2}, Je_{3}=e_{4}$. If
		\[
			E_{1212}=E_{3434}=k>0,\quad E_{1313}=E_{1414}=E_{2323}=E_{2424}=0,
		\] 
		and the off-diagonal term $E_{1234}=\frac{2k}{3}$. Then
		\[
			E=\frac{k}{3}E_{J},
		\]
		where $E_{J}$ is the algebraic curvature tensor defined in \eqref{eq:EJ}.
	\end{enumerate}
\end{lemma}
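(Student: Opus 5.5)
For part (1) I would use a first-variation argument. Fix $j\ge 3$ and consider the function
\[
f(s)=E(e_1+se_j,\,e_2,\,e_1+se_j,\,e_2)=E_{1212}+2sE_{121j}+s^2E_{j2j2}.
\]
This is the unnormalized sectional curvature of the plane spanned by $e_1+se_j$ and $e_2$. By nonnegative sectional curvature, $f(s)\ge 0$ for all $s$, and $f(0)=E_{1212}=0$. So $s=0$ is a minimum and $f'(0)=2E_{121j}=0$. Running the same argument with $f(s)=E(e_1,e_2+se_j,e_1,e_2+se_j)$ gives $E_{122j}=0$, up to the symmetries in \eqref{eq:AlgCur}. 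The only point needing care is the expansion: the cross terms combine via the pair symmetry into $2E_{121j}$.

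For part (2) the plan is to show that $F:=E-\frac{k}{3}E_J$ vanishes, by determining every component of $E$ in the basis $e_1,\dots,e_4$. First I compute $E_J$ from \eqref{eq:EJ}. Its sectional values are
\[
(E_J)_{1212}=(E_J)_{3434}=3,\qquad (E_J)_{1313}=(E_J)_{1414}=(E_J)_{2323}=(E_J)_{2424}=0,
\]
and its off-diagonal values are $(E_J)_{1234}=2$, $(E_J)_{1324}=1$, $(E_J)_{1423}=-1$. All components with exactly three distinct indices vanish. Hence $F$ has all six coordinate sectional curvatures zero and $F_{1234}=0$. It remains to show that every other component of $E$ matches.

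\textbf{Three-index components.} Part (1) applied to the four null planes $\{1,3\},\{1,4\},\{2,3\},\{2,4\}$ kills $E_{131j}$, $E_{133j}$ and their analogues. This covers every component of the form $E_{abac}$ where $\{a,b\}$ is one of these mixed planes. Components such as $E_{1213}$ are of the form $E_{\cdot 1 \cdot 1}$ with the repeated index $1$ lying in the null plane $\{1,3\}$, i.e. $E_{2131}=E_{1312}$, which part (1) for the plane $\{1,3\}$ gives as zero. Checking each of the twelve three-index types this way, each contains a null mixed pair, so all of them vanish.

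\textbf{Remaining four-index components.} These are $E_{1324}$ and $E_{1423}$, linked by Bianchi: $E_{1234}+E_{1342}+E_{1423}=0$, i.e. $E_{1324}-E_{1423}=E_{1234}=\frac{2k}{3}$. I must show $E_{1324}=\frac{k}{3}$, i.e. $E_{1324}+E_{1423}=0$. This is the step I expect to be the main obstacle. To handle it I would test nonnegative sectional curvature on the mixed planes $\mathrm{span}(e_1+se_2,\,e_3+te_4)$. Because all mixed sectional curvatures and all three-index components vanish, the value is
\[
Q(s,t)=2t\,E_{1314}+2s\,E_{1323}+2st\,(E_{1324}+E_{1423})+\dots,
\]
in which the coordinate terms vanish and the single-index cross terms are zero. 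So $Q(s,t)=2st(E_{1324}+E_{1423})$, up to the exact bookkeeping of the bilinear cross terms. Since this must be $\ge 0$ for all signs of $s,t$, it forces $E_{1324}+E_{1423}=0$.

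With every component of $F$ now zero, $E=\frac{k}{3}E_J$.
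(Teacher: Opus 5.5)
Your proof is correct. Part (1) and the vanishing of the twelve three-index components follow the paper. Your handling of the two remaining four-index components, however, takes a genuinely different and more elementary route.

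There is one harmless slip in part (1). The variation $e_1\mapsto e_1+se_j$ has cross term $2E_{12j2}=-2E_{122j}$, so it is this variation that kills $E_{122j}$, while $e_2\mapsto e_2+se_j$ kills $E_{121j}$. Since you run both, the conclusion is unaffected.

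For the decisive step, the paper proceeds as follows:
\begin{enumerate}
\item it computes $\mathrm{Ric}(E)=k\,\mathrm{Id}$;
\item it uses the four-dimensional splitting $\Lambda^2=\Lambda^+\oplus\Lambda^-$ to write $\mathcal{R}_E$ as two diagonal $3\times 3$ blocks;
\item it identifies the minimal sectional curvature with $\frac12(\mu_++\mu_-)$;
\item it notes that the last two eigenvalues in the two blocks sum to $\mp 2k/3$, so $\mu_++\mu_-\le 0$, and equality (forced by nonnegativity) gives $E_{1324}=k/3$.
\end{enumerate}

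You instead test nonnegativity directly on the planes $\mathrm{span}(X,Y)$ with $X=e_1+se_2$, $Y=e_3+te_4$. The bookkeeping you hedged on is in fact exact:
\[
\begin{aligned}
E(X,Y,X,Y)={}&E_{1313}+t^{2}E_{1414}+s^{2}E_{2323}+s^{2}t^{2}E_{2424}\\
&+2tE_{1314}+2sE_{1323}+2st^{2}E_{1424}+2s^{2}tE_{2324}+2st\,(E_{1324}+E_{1423}).
\end{aligned}
\]
The first row vanishes by the four null mixed planes. The first four terms of the second row vanish by part (1). Since $X,Y$ are independent for all $s,t$, you get $E_{1324}+E_{1423}=0$. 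Combined with the Bianchi relation $E_{1324}-E_{1423}=E_{1234}=2k/3$, this gives $E_{1324}=k/3$ and $E_{1423}=-k/3$, matching $\frac{k}{3}E_J$.

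Your route avoids the Ricci computation, the Hodge star and the spectral description of sectional curvature. It uses neither $k>0$ nor the orientation. It also shows that $E_{1324}+E_{1423}=0$ holds whatever $E_{1234}$ is, so the hypothesis $E_{1234}=2k/3$ enters only through Bianchi.

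The paper's route needs more machinery, but it places the lemma in the standard $\Lambda^{\pm}$ picture. There the hypothesis $E_{1234}=2k/3$ shows up as an extremal eigenvalue configuration of $\mathcal{R}_E^{\pm}$.
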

\begin{proof}
	(1) For a fixed $j\ge 3$ and every $t\in \mathbb{R}$, the vectors $e_{1}$ and $e_{2}+te_{j}$ are linearly independent, so 
	\begin{equation*}
		E(e_{1}, e_{2}+te_{j},e_{1}, e_{2}+te_{j})\ge 0.
	\end{equation*}
	Expanding and using the curvature symmetries gives:
	\begin{equation*}
		\begin{aligned}
			E(e_{1}, e_{2}+te_{j},e_{1}, e_{2}+te_{j}) &= E_{1212} +2t E_{121j} + t^{2}E_{1j1j}\\
								   &= 2t E_{121j} + t^{2}E_{1j1j}.
		\end{aligned}
	\end{equation*}
	This quadratic is nonnegative for every $t$, hence $E_{121j}=0$. The equality $E_{122j}=0$ can be proved similarly.

	(2) By part (1), the four vanishing coordinate sectional curvatures, together with the pair and skew symmetries, put every component having exactly three distinct indices into one of the vanishing forms covered by (1). Thus all such components vanish. We can then compute the Ricci tensor; for instance $\mathrm{Ric}_{11} = E_{1212} + E_{1313} + E_{1414} = k$. By symmetry, $\mathrm{Ric}_{i i}=k$ for $i=1,2,3,4$, while for $i\ne j$
	\begin{equation*}
		\mathrm{Ric}_{ij} =\sum_{m=1}^{4}E_{imjm}=0.
	\end{equation*}
	Therefore 
	\begin{equation}\label{eq:RicOp}
		\operatorname{Ric}(E) = k \left\langle , \right\rangle, \qquad \mathrm{Ric}(E)^{\sharp} = k \mathrm{Id},
	\end{equation}
	on this four-dimensional space. Let $*: \Lambda^{2}V^{*}\to \Lambda^{2}V^{*}$ be the Hodge star determined by the given orientation. The standard four-dimensional decomposition of the curvature operator, cf. Chapter 1 of \cite{Bes1987}, identifies its off-diagonal blocks with the trace-free Ricci tensor. Hence \eqref{eq:RicOp} implies that $\mathcal{R}_{E}$ preserves 
	\[
		\Lambda^2 = \Lambda^{+}\oplus \Lambda^{-},\qquad \Lambda^{\pm}=\left\{ \xi\in \Lambda^{2}V^{*}\ : \ *\xi = \pm \xi \right\}.
	\]
	Write $e^{ij}=e^{i}\wedge e^{j}$ and choose the orthonormal bases
	\[
	\begin{aligned}
		\omega_1&=\frac{1}{\sqrt{2}}(e^{12}+e^{34}),&
		\omega_2&=\frac{1}{\sqrt{2}}(e^{13}-e^{24}),&
		\omega_3&=\frac{1}{\sqrt{2}}(e^{14}+e^{23})
	\end{aligned}
	\]
	of $\Lambda^{+}$ and
	\[
	\begin{aligned}
		\eta_1&=\frac{1}{\sqrt{2}}(e^{12}-e^{34}),&
		\eta_2&=\frac{1}{\sqrt{2}}(e^{13}+e^{24}),&
		\eta_3&=\frac{1}{\sqrt{2}}(e^{14}-e^{23})
	\end{aligned}
	\]
	of $\Lambda^{-}$. Set $r=E_{1234}=2k/3$, $s=E_{1324}$, and $u= E_{1423}$. The first Bianchi identity gives $r-s+u=0$, which implies $u = s - 2k/3$. Define the curvature operator blocks
	\begin{equation*}
		\mathcal{R}_{E}^{+} = \mathcal{R}_{E}|_{\Lambda^{+}}, \qquad \mathcal{R}_{E}^{-} = \mathcal{R}_{E}|_{\Lambda^{-}}.
	\end{equation*}
	Because all curvature components with exactly three distinct indices are zero, a direct calculation shows that they can be written in the matrix form under the basis $\{\omega_{i}\}$ and $\{\eta_{i}\}$ respectively
	\[
		\mathcal{R}_{E}^{+}=\begin{pmatrix}
		k+r&0&0\\
		0&-s&0\\
		0&0&u
		\end{pmatrix},
		\qquad
		\mathcal{R}_{E}^{-}=
		\begin{pmatrix}
		k-r&0&0\\
		0&s&0\\
		0&0&-u
		\end{pmatrix}.
	\]
	We can directly compute the spectra of these two diagonal curvature blocks:
	\begin{equation*}
		\text{Spec}(\mathcal{R}_{E}^{+}) = \left( \frac{5k}{3}, -s, s-\frac{2k}{3} \right),\qquad \text{Spec}(\mathcal{R}_{E}^{-}) = \left( \frac{k}{3}, s, \frac{2k}{3}-s \right).
	\end{equation*}
	Let $\mu_{\pm}$ denotes $\lambda_{\min}(\mathcal{R}_{E}^{\pm})$. We claim that the nonnegativity of the sectional curvature is equivalent to
	\begin{equation}\label{eq:min-eigenvalue-condition}
		\mu_{+}+ \mu_{-}\geq 0.
	\end{equation}
	Geometrically, every unit simple two-form $\xi\in\Lambda^{2}V^{*}$ can be written as
	\[
		\xi=\frac{1}{\sqrt{2}}(\alpha+\beta),\qquad \alpha\in\Lambda^{+},\quad \beta\in\Lambda^{-},\quad |\alpha|=|\beta|=1.
	\]
	Conversely, every expression of this form is a unit decomposable two-form. Under the metric indentification it represents an oriented two-plane, and 
	\begin{equation*}
		\left\langle \mathcal{R}_{E}\xi, \xi\right\rangle = \frac{1}{2}\left( \left\langle \mathcal{R}_{E}^{+}\alpha, \alpha \right\rangle 
		+ \left\langle \mathcal{R}_{E}^{-}\beta, \beta \right\rangle\right).
	\end{equation*}
	Minimizing over all unit $\alpha$ and $\beta$ gives
	\[
		\min_{\sigma\in \mathrm{Gr}_{2}(\mathbb{R}^{4})}K_E(\sigma)=\frac{1}{2}(\mu_{+}+\mu_{-}).
	\]
	This proves \eqref{eq:min-eigenvalue-condition}. In the self-dual block, the sum of the last two eigenvalues is $-2k/3$, so $\mu_{+}\le -k/3$. Similarly, the sum of the last two eigenvalues in the anti-self-dual block is $2k/3$, which implies $\mu_{-} \le k/3$. Consequently, $\mu_{+}+\mu_{-} \le 0$, whereas nonnegative sectional curvature forces the opposite inequality. Equality must therefore hold in both preceding eigenvalue bounds. The last two eigenvalues in each block are equal, so $-s = s - 2k/3$, whence $s = k/3$ and $u=-k/3$. These components determine the tensor in dimension four and agree with those of $kE_{J}/3$. Hence $E=kE_{J}/3$.
\end{proof}

\begin{remark}
	The sharpness is pointwise and algebraic. It does NOT by itself imply that the global pinching threshold $\beta_{n}$ is topologically optimal.
\end{remark}

\begin{lemma}\label{lem:Even}
	Let $V$ be a $2\ell$-dimensional Euclidean space. Suppose that an algebraic curvature tensor $E$ on $V$ has nonnegative sectional curvature and positive scalar curvature, and let $\omega\in \Lambda^{2}V^{*}$ be a unit, full rank two-form. Here full rank means that skew-adjoint endomorphism $B_{\omega}$ defined by 
	\begin{equation*}
		\omega(X, Y) = \left\langle B_{\omega}X, Y \right\rangle
	\end{equation*}
	is invertible. If $\omega$ satisfies 
	\begin{equation*}
		\left\langle q_{2}(E)\omega, \omega \right\rangle = - \frac{2(\ell-1)}{3\ell} \sca(E),
	\end{equation*}
	then, after orienting the two-plane blocks appropriately, there is an orthonormal basis $e_{1}, \cdots, e_{2\ell}$ such that
	\begin{equation*}
		\omega = \frac{1}{\sqrt{\ell}} \sum_{a=1}^{\ell}e^{2a-1}\wedge e^{2a}, \qquad E= c E_{J},\ c>0,
	\end{equation*}
	where $J= \sqrt{\ell}B_{\omega}$ is the orthogonal complex structure determined by $Je_{2a-1} = e_{2a}$.
\end{lemma}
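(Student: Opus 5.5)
Because $\omega$ is a unit full-rank two-form in even dimension $n=2\ell$, we put it in the normal form \eqref{eq:SharpEstimate02} with all $\lambda_a\neq 0$, and there is no unused direction. Reversing $e_{2a}$ when needed, we may take every $\lambda_a>0$. The plan is to run the proof of \autoref{thm:SharpEstimate} again and record, inequality by inequality, what equality forces. Equality in the final bound makes every inequality in \eqref{eq:SharpEstimate03} and \eqref{eq:SharpEstimate04} an equality.

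The consequences are as follows.
\begin{enumerate}
\item Equality in the last line of \eqref{eq:SharpEstimate03} reads $(\lambda_a^2+\lambda_b^2-\tfrac23\lambda_a\lambda_b)A_{ab}=0$. The coefficient is positive, so $A_{ab}=0$ for all $a<b$.
\item Equality in the first line of \eqref{eq:SharpEstimate03}, which came from \autoref{lem:NW} with $A_{ab}=0$, gives $6r_{ab}=4\sqrt{k_ak_b}$.
\item Equality in $\sum_a k_a\le \sca/2$ forces every coordinate sectional curvature other than the $k_a$ to vanish. This is consistent with $A_{ab}=0$.
\item Equality in the Cauchy--Schwarz step with $\sum_a\lambda_a^2=1$ forces $\sqrt{k_a}=c'\lambda_a$. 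Equality in $\sum_{a<b}x_ax_b\le\frac{\ell-1}{2\ell}(\sum_a x_a)^2$ forces all $x_a=\lambda_a\sqrt{k_a}$ equal, so $\lambda_a^2$ is constant.
\end{enumerate}
From (4), every $\lambda_a=1/\sqrt\ell$ and every $k_a=k$. Since $\sca>0$, we get $k>0$, and then (2) gives $r_{ab}=2k/3$. This already yields $\omega=\ell^{-1/2}\sum_a e^{2a-1}\wedge e^{2a}$, and $J=\sqrt\ell B_\omega$ satisfies $Je_{2a-1}=e_{2a}$.

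Next we identify $E$. For each pair $a<b$, the restriction of $E$ to $\mathrm{span}\{e_{2a-1},e_{2a},e_{2b-1},e_{2b}\}$ is an algebraic curvature tensor with nonnegative sectional curvature, and it satisfies the hypotheses of \autoref{lem:Null}(2). Hence this restriction equals $\frac k3 E_{J}$ restricted to the same four-space. This determines every component of $E$ whose indices lie in at most two blocks.

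It remains to treat components whose indices meet three or four distinct blocks, and this is the step I expect to be the main obstacle. The tool is \autoref{lem:Null}(1): every coordinate plane $e_i\wedge e_j$ with $i,j$ in different blocks has zero sectional curvature. The same holds for planes such as $\mathrm{span}\{e_i,\,e_j+te_m\}$ lying across blocks, because of the two-block description just obtained. By \autoref{lem:Null}(1), $E(e_i,e_j,e_i,\cdot)=0$ whenever $i,j$ are in different blocks. To reach mixed components like $E_{ijkm}$ with four different blocks, I would first try polarization. For $i,j$ in different blocks, apply \autoref{lem:Null}(1) to planes spanned by $e_i+se_k$ and $e_j+te_m$ whose sectional curvature vanishes, and compare with the already known values of $\frac k3E_J$. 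A cleaner alternative is to check that $F=E-\frac k3E_J$ satisfies $F(X,Y,X,Y)\ge -\frac k3 E_J(X,Y,X,Y)$, vanishes on all coordinate planes, and has zero restriction to every two-block subspace. Then the second variation of the sectional curvature along directions in the null cross-block planes kills the remaining components, via the Bianchi identity, as in \autoref{lem:Null}(1). Combining the two steps gives $E=cE_J$ with $c=k/3>0$.
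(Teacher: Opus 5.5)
Up to the identification $E|_{V_{ab}}=\frac k3E_J|_{V_{ab}}$ you follow the paper's proof exactly. You use the same equality tracing through \eqref{eq:SharpEstimate03} and \eqref{eq:SharpEstimate04}, the same sign normalization giving $r_{ab}=2k/3$, and the same application of \autoref{lem:Null}(2) on each two-block space. One small ordering point: you need $\sca(E)>0$, hence $c'\neq 0$, before you can conclude that $\lambda_a^2$ is constant.

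The gap is the last step, which you only outline. Nothing in the proposal actually shows that the components with four distinct indices spread over three or four blocks vanish. Your first suggestion also fails as stated. \autoref{lem:Null}(1) needs a plane already known to be null, and $\mathrm{span}\{e_i+se_k,e_j+te_m\}$ is not known to be null until exactly these components are determined. There are also no nonzero ``known values'' of $\frac k3E_J$ to compare with there.

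The missing piece is a short explicit computation, and the paper supplies it.

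\textbf{Three-index components.} Blocks have size two, so among three distinct indices two lie in different blocks. Hence \autoref{lem:Null}(1) kills every component with exactly three distinct indices. These vanishings remove all odd-order terms in the expansions below.

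\textbf{Three blocks.} Take $e_p,e_q\in\sigma_a$, $e_r\in\sigma_b$, $e_s\in\sigma_c$, and set $x=E_{pqrs}$, $y=E_{prqs}$, $z=E_{psqr}$. Then
\begin{gather*}
E(e_p+te_r,e_q\pm te_s,e_p+te_r,e_q\pm te_s)=k\pm 2(x-z)t^2,\\
E(e_p+te_s,e_q\pm te_r,e_p+te_s,e_q\pm te_r)=k\mp 2(x+y)t^2.
\end{gather*}
Nonnegativity for large $t$ forces $x=z$ and $x+y=0$, and the Bianchi identity $x-y+z=0$ then gives $x=y=z=0$. This is your ``second variation at the null cross-block plane $\mathrm{span}\{e_r,e_s\}$'' made concrete.

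\textbf{Four blocks.} Here the paper avoids computation. On $W=\mathrm{span}\{e_p,e_q,e_r,e_s\}$ all coordinate sectional curvatures vanish, so $\sca(E|_W)=0$. Nonnegativity then forces every sectional curvature of $E|_W$, and hence $E|_W$ itself, to vanish.

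Your polarization idea can also be rescued in the four-block case if you use nonnegativity directly instead of \autoref{lem:Null}(1). For $X=e_i+se_k$ and $Y=e_j+te_m$ one gets $E(X,Y,X,Y)=2st\,(E_{ijkm}-E_{imjk})$. This is sign-indefinite unless the bracket vanishes, so $E_{ijkm}=E_{imjk}$. Swapping the roles of $j$ and $k$ gives $E_{ikjm}=-E_{imjk}$. Together with the Bianchi identity, these relations force all three components to vanish.
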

\begin{proof}
	Choose an orthonormal basis in which $\omega$ has the normal form
	\[
	\omega =\sum_{a=1}^{\ell}\lambda_a\,e^{2a-1}\wedge e^{2a},\qquad \sum_{a=1}^{\ell}\lambda_a^2=1.
	\]
	Since $\omega$ has full rank, each $\lambda_a$ is nonzero. Set
	\[
		\sigma_a=\operatorname{span}\{e_{2a-1},e_{2a}\}.
	\]
	For later reference, define exactly as in the proof of \autoref{thm:SharpEstimate}
	\begin{equation*}
		k_{a}  = E_{2a-1, 2a, 2a-1, 2a},\quad A_{ab} = \sum_{p\in \{2a-1, 2a\}} \sum_{q\in \{2b-1, 2b\}} E_{pqpq},
	\end{equation*}
	\begin{equation*}
		r_{ab}= E_{2a-1,2a, 2b-1, 2b},\quad (a<b).
	\end{equation*}
	Trace the equality conditions in the proof of \autoref{thm:SharpEstimate}. First in \eqref{eq:SharpEstimate03}, the full rank assumption implies that $\lambda_{a}^{2}+\lambda_{b}^{2}-\frac{2}{3}|\lambda_{a}\lambda_{b}|>0$, equality in \eqref{eq:SharpEstimate03} forces $A_{ab}=0$ for every $a<b$. Second, in \eqref{eq:SharpEstimate04}, there are two inequalities, the first equality forces all $|\lambda_{a}|\sqrt{k_{a}}$ equal and the second equality makes $|\lambda_{a}|$ proportional to $\sqrt{k_{a}}$. Equality also holds in $\sum_{a}k_{a}\le \sca(E)/2$. Since $\omega$ has full rank and $\sca(E)>0$, no $k_{a}$ can vanish. Combining these conditions gives
	\begin{equation*}
		|\lambda_{a}|=\ell^{-\frac{1}{2}},\qquad k_{a}=k >0.
	\end{equation*}
	Equality in the pairwise Ni-Wilking bound gives $|r_{ab}|=2k/3$ and $\operatorname{sign}(r_{ab})=\operatorname{sign}(\lambda_{a}\lambda_{b})$. After reversing orientation of blocks if necessary, we may assume
	\[
		\lambda_{a}= \ell^{-1/2},\qquad r_{ab}=2k/3,\qquad a<b
	\]
	It remains to check the other components of $E$. Fix $a<b$ and let $V_{ab}=\sigma_{a}\oplus \sigma_{b}$. The four nonnegative sectional curvatures comprising $A_{ab}$ all vanish.  Thus on $V_{ab}$,
	\begin{equation*}
			E_{2a-1,2a, 2a-1, 2a}=E_{2b-1, 2b, 2b-1, 2b} = k,\quad
			E_{2a-1, 2a, 2b-1, 2b}=r_{ab}=\frac{2k}{3}.
	\end{equation*}
	Moreover $E_{ijij}=0$ whenever $e_{i}\in \sigma_{a}$ and $e_{j}\in \sigma_{b}$. By \autoref{lem:Null} (2), we get the equality of restricted curvature tensors:
	\begin{equation}\label{eq:evenRigi02}
		E|_{V_{ab}}=\frac{k}{3}E_{J}|_{V_{ab}}
	\end{equation}
	Part (1) of \autoref{lem:Null}, together with curvature symmetries also shows that
	\begin{equation}\label{eq:evenRigi03}
		E_{ijkl}=0\quad \text{whenever exactly three of}\ i,j,k,l \ \text{are distinct}.
	\end{equation}
	It remains to consider components involving four distinct basis vectors. Those with indices in two blocks have already been determined by \eqref{eq:evenRigi02}. Therefore, we only have to consider the two remaining possibilities.

	First suppose that the four indices lie in three blocks. Let $e_{p}, e_{q}$ be the two basis vectors in $\sigma_{a}$ and let $e_{r}\in \sigma_{b}$, $e_{s}\in \sigma_{c}$, where $a, b, c$ are distinct. Set
	\begin{equation*}
		x=E_{pqrs},\quad y=E_{prqs},\quad z=E_{psqr}.
	\end{equation*}
	The first Bianchi identity gives:
	\begin{equation*}
		x-y+z=0.
	\end{equation*}
	All coordinate sectional curvatures involving vectors from two different blocks vanish, and all components with three distinct indices vanish by \eqref{eq:evenRigi03}. Therefore, for every $t\in \mathbb{R}$:
	\begin{equation*}
		\begin{aligned}
		 0&\le E(e_{p}+te_{r}, e_{q}+te_{s}, e_{p}+te_{r}, e_{q}+te_{s})  \\
		 &= k+ 2(x-z) t^{2},
		\end{aligned}
	\end{equation*}
	whereas
	\begin{equation*}
		\begin{aligned}
		 0&\le E(e_{p}+te_{r}, e_{q}-te_{s}, e_{p}+te_{r}, e_{q}-te_{s})  \\
		 &= k+ 2(z-x) t^{2},
		\end{aligned}
	\end{equation*}
	Since both inequalities hold for all $t$, we obtain $x=z$. A second pair of variations gives
	\begin{equation*}
		\begin{aligned}
		 0&\le E(e_{p}+te_{s}, e_{q}+te_{r}, e_{p}+te_{s}, e_{q}+te_{r}) = k - 2(x+y)t^{2} \\
		 0&\le E(e_{p}+te_{s}, e_{q}-te_{r}, e_{p}+te_{s}, e_{q}-te_{r}) = k + 2(x+y)t^{2} 
		\end{aligned}
	\end{equation*}
	Therefore $x+y=0$. Together with $x=z$ and $x-y+z=0$, we have
	\begin{equation*}
		x=y=z=0.
	\end{equation*}
	Finally, suppose $e_{p}, e_{q}, e_{r}, e_{s}$ belong to four different blocks, we set
	\begin{equation*}
		W= \mathrm{span}\left\{ e_{p}, e_{q}, e_{r}, e_{s} \right\}.
	\end{equation*}
	Every coordinate sectional curvature of $E|_{W}$ is zero, hence the scalar curvature $\sca(E|_{W})=0$. Since $E|_{W}$ has nonnegative sectional curvature, it follows that every sectional curvature of $E|_{W}$ is zero. Since algebraic curvature tensor is determined by its sectional curvatures, so 
	\begin{equation*}
		E|_{W}=0.
	\end{equation*}
	We have now determined all components of $E$, by \eqref{eq:evenRigi02}, $E=kE_{J}/3$ on the sum of any two blocks, while all components involving three or more blocks vanish. The tensor $E_{J}$ has the same vanishing property since $J$ preserves each $\sigma_{a}$. Therefore $E=kE_{J}/3$ holds globally.
\end{proof}

\section{Bochner Vanishing and Endpoint Rigidity}

In this section we combine the algebraic estimate with the Bochner formula. For a Riemannian manifold $(M, g)$, let $I$ be the constant curvature one curvature tensor \eqref{eq:CurvTensI}. At each point $x\in M$, we define 
\begin{equation}\label{eq:notations}
	E_{x}= (\mathrm{Rm})_{x} - K_{\min} I_{x}, \quad \tau (x)=\sca(E_{x})=\sca(M)_{x}-n(n-1)K_{\min}(x).
\end{equation}
Then $E_{x}$ has nonnegative sectional curvature. Define constants:
\begin{equation}\label{eq:constants}
	C_{n}=\frac{3\ell(n-2)}{\ell -1},\qquad \ell = \lfloor\frac{n}{2}\rfloor.
\end{equation}
For later use, we note
\begin{equation*}
	C_{2\ell} = 3n = 6\ell, \qquad C_{2\ell +1} = 3n+ \frac{3}{\ell -1}.
\end{equation*}
The following equavalent sectional-scalar curvature pinching conditions will be used frequently:
\begin{equation*}
	\begin{aligned}
	 K_{\min}\ge \beta_{n}S_{0}&\Longleftrightarrow  \tau \le C_{n} K_{\min} \\
	 K_{\min} > \beta_{n}S_{0}&\Longleftrightarrow  \tau < C_{n} K_{\min} \\
	\end{aligned}
\end{equation*}

Now we can prove our main theorem
\begin{proof}[Proof of \autoref{thm:Main}]
	At every point, $K_{\min}\le S_{0}$, since $S_{0}$ is the average of sectional curvatures; moreover $0<\beta_{n}<1$. Thus the strict condition
	\begin{equation}\label{eq:strict}
		K_{\min}>\beta_{n}S_{0},
	\end{equation}
	forces $S_{0}>0$ and $K_{\min}>0$. Elementary rearrangement show that \eqref{eq:strict} is equivalent to 
	\begin{equation}\label{eq:tau}
		\tau < C_{n}K_{\min}.
	\end{equation}
	By \autoref{thm:SharpEstimate}, at every point $x\in M$,
	\begin{equation*}
		\begin{aligned}
			q_{2}(\mathrm{Rm})&=q_{2}(K_{\min}I+E)\\
					  &\ge \left( 2(n-2)K_{\min}-\frac{2(\ell-1)}{3\ell}\tau \right) \mathrm{Id}\\
					  &=\frac{2(\ell -1)}{3\ell} (C_{n}K_{\min} - \tau	)\mathrm{Id}.
		\end{aligned}
	\end{equation*}
	The right hand side is positive precisely under \eqref{eq:tau}. If $\omega$ is harmonic, integrated Weitzenb\"{o}ck formula gives:
	\begin{equation*}
		0=\int_{M} \left( |\nabla \omega|^{2} + \left\langle q_{2}(\mathrm{Rm})\omega, \omega \right\rangle \right)dV_g
	\end{equation*}
	so $\omega=0$ and $H^{2}(M; \mathbb{R})=0$. If $M$ is orientated, Poincar\'{e} duality gives $H^{n-2}(M; \mathbb{R})=0$. Otherwise apply the same argument to the orientation double cover $\tilde{M}$. The pullback $H^{n-2}(M; \mathbb{R})\to H^{n-2}(\tilde{M}; \mathbb{R})$ is injective, so the same vanishing holds for $M$.
\end{proof}

\begin{proof}[Proof of \autoref{thm:WeakEnd}]
	We divide the proof into five steps in order to separate the Bochner formular, holonomy, equality discussion and uniformization argument.
	
	\emph{Claim 1: Harmonic two-forms are parallel.}

	Clearly the weak pinching condition implies $S_{0}\ge 0$, $K_{\min}\ge 0$, and it is equivalent to 
	\begin{equation}\label{eq:tauE}
		\tau\le C_{n}K_{\min}.
	\end{equation}
	It follows that $q_{2}(\mathrm{Rm})\ge 0$. For a harmonic two-form $\omega$, the integrated Weitzenb\"{o}ck identity has a nonnegative integrand. It follows that
	\begin{equation*}
		\nabla \omega =0, \qquad q_{2}(\mathrm{Rm})\omega =0.
	\end{equation*}
	Assume from now on, that $g$ is not flat, and fix a point $p\in M$. 

	\emph{Claim 2: The restricted holonomy representation $\operatorname{Hol}_{p}^{0}(M) \curvearrowright T_{p}M$ is irreducible. }

	Suppose the representation were reducible. Since $M$ is complete, by the de Rham decomposition theorem, the universal cover of $M$ splits as a nontrivial Riemannian product, cf \cite{deR1952}, \cite{Bes1987}. At each point mixed planes have zero sectional curvature, it follows that $K_{\min}=0$ at every point. Then the pinching condition gives $S_{0}\equiv 0$. Since sectional curvatures are nonnegative, it forces $g$ to be flat, a contradiction. Therefore, the restricted holonomy representation is irreducible.

	\emph{Claim 3: The bounds on $b_{2}$.}

	Let $\omega$ be a parallel two-form, and let $B_{\omega}$ be the associated parallel skew-adjoint endomorphism defined by:
	\begin{equation*}
		\omega(X, Y)= \left\langle B_{\omega}X, Y \right\rangle.
	\end{equation*}
	At point $p$, $B_{\omega}$ commutes with the restricted holonomy. Therefore both $\ker B$ and $\operatorname{Im}B$ are invariant. So irreducibility makes every nonzero such $B_{\omega}$ invertible. This is impossible in odd dimension; hence $b_{2}(M)=0$ whenever $n$ is odd. For even $n$, set
	\begin{equation*}
		\mathcal{D}_{p}:=\operatorname{End}_{\operatorname{Hol}^{0}_{p}(M)}(T_{p}M)=
		\left\{ B\in \operatorname{End}_{\mathbb{R}}(T_{p}M)\ : \ Bh=hB \ \forall h\in \operatorname{Hol}_{p}^{0}(M) \right\}.
	\end{equation*}
	Real Schur theory and Forbenius' theorem gives $\mathcal{D}_{p}\cong \mathbb{R}, \mathbb{C}$ or $\mathbb{H}$.
	Then $B_{\omega}^{*}=-B_{\omega}$ and $\nabla g = \nabla \omega =0$ implies $\nabla B_{\omega} =0$. Therefore $B_{\omega}|_{p}$ commutes with the full holonomy group. It follows that its kernel and image are invariant, so irreducibility makes every nonzero $B_{\omega}$ invertible. This is impossible in odd dimension, therefore $b_{2}(M)=0$ when $n$ is odd.
	For even $n$, we set
	\begin{equation*}
		\mathcal{D}_{p} = \mathrm{End}_{\mathrm{Hol}_{p}^{0}}(T_{p}M).
	\end{equation*}
	By real Schur theory and Frobenius' theorem, $\mathcal{D}_{p}$ is isomorphic to $\mathbb{R}, \mathbb{C}$, or $\mathbb{H}$; the metric adjoint induces the standard conjugation, so $\mathcal{D}_{p}$ is a finite dimensional real $C^{*}$-division algebra; its involution is indentified with standard conjugation on $\mathbb{R}, \mathbb{C}$ or $\mathbb{H}$. The skew-adjoint parts have real dimension $0,1$ and $3$ respectively cf \cite[Chapter 10]{Bes1987}. Evaluation map
	\begin{equation*}
		\begin{aligned}
			\mathrm{ev}_{p}: \{ \text{Parallel two-forms}\} &\to \mathcal{D}_{p}\cap \mathfrak{so}(T_{p}M).\\
			\omega ^{}\mapsto B_{\omega,p}.
		\end{aligned}
	\end{equation*}
	injects the space of parallel two-forms into $\mathcal{D}_{p}\cap \mathfrak{so}(T_{p}M)$. 

	The quaternionic case would give three parallel complex structures on the universal cover, making it hyperk\"{a}hler and hence Ricci-flat, \cite{Joy2000}. Since our sectional curvature is nonnegative, Ricci flat forces every sectional curvature to vanish, a contradiction. Therefore only real and complex cases remain, and Hodge theory gives:
	\begin{equation*}
		b_{2}(M)\le 1.
	\end{equation*}

	\emph{Claim 4: pointwise equality on the nonflat locus.} 

	Now we assume $n=2\ell$ and $b_{2}(M)>0$. Choose a harmonic two-form $\omega$ normalized to have unit length. It is parallel by the Claim 1, and Claim 2 shows that its associated endomorphism $B_{\omega}$ is invertible; hence $\omega$ must have full rank. Let
	\begin{equation*}
		U = \left\{ x\in M\ : \ \mathrm{Rm}_{x}\ne 0 \right\}.
	\end{equation*}
	This is a nonempty open subset of $M$. At every point of $x\in U$, nonnegative sectional curvature and $\mathrm{Rm}_{x}\ne 0$ give $S_{0}>0$, and the pinching assumption give $K_{\min}>0$. Since $C_{n}=3n$ for $n$ even. \autoref{thm:SharpEstimate} and $\tau\le 3nK_{\min}$ gives the following:
	\begin{equation*}
		\begin{aligned}
		0 &= \left\langle q_{2}(\mathrm{Rm})_{x}\omega_{x}, \omega_{x} \right\rangle  \\
	 	&\ge 2(n-2) K_{\min}(x) - \frac{2(\ell-1)}{3\ell} \tau(x)\\
	 	&\ge 2(n-2) K_{\min}(x) - \frac{2(\ell -1)}{3\ell} 3nK_{\min}(x) =0
		\end{aligned}
	\end{equation*}
	Therefore 
	\begin{equation}\label{eq:Endpoint03}
	\tau(x) = 3n K_{\min}(x), 
	\end{equation}
	and $(E_{x}, \omega_{x})$ realizes equality in \autoref{thm:SharpEstimate} at every $x\in U$.
	
	Define the parallel skew-adjoint field
	\begin{equation*}
		J= \sqrt{\ell} B_{\omega}.
	\end{equation*}
	By \autoref{lem:Even}, at every point $x\in U$,
	\begin{equation}\label{eq:Endpoint02}
		(B_{\omega})_{x}^{2} = - \ell^{-1} \mathrm{Id}_{T_{x}M},\qquad E_{x} = \mu(x) (E_{g,J})_{x}
	\end{equation}
	for some positive function $\mu$ defined on $U$. The parallel endomorphism field $B_{\omega}^{2}+\ell^{-1} \mathrm{Id}_{TM}$ vanishes at the nonempty set $U$, hence on the whole $M$. Therefore we just proved $J^{2} = - \mathrm{Id}_{TM}$. Moreover, $J^{*} = -J$ and $\nabla J=0$. It follows that $J$ is a parallel orthogonal complex structure and $(M, g, J)$ is K\"{a}hler.

	By \eqref{eq:JPro01}, $\sca((E_{J}))=3n$. Comparing scalar curvature in \eqref{eq:Endpoint02} and using \eqref{eq:Endpoint03} gives $\mu = K_{\min}$ on $U$. Therefore, we have
	\begin{equation}\label{eq:Endpoint01}
		\mathrm{Rm} = K_{\min} (I + E_{J})\qquad \text{on}\ U.
	\end{equation}
	
	\emph{Claim 5: The metric is Fubini-Study upto rescaling.}

	Since $\operatorname{Ric}(I_{x}) = (n-1)g_{x}$ and $\operatorname{Ric}(E_{g, J})_{x} = 3 g_{x}$, contraction of \eqref{eq:Endpoint01} gives
	\begin{equation}\label{eq:Endpoint04}
		\operatorname{Ric}_{g} = (n+2) K_{\min} g,\qquad \sca_{g}= n(n+2)K_{\min}
		\qquad \text{on}\ U.
	\end{equation}
	In particular $K_{\min} = \sca_{g}/ n(n+2)$ is smooth on $U$. The contracted Bianchi identity gives
	\begin{equation*}
		(n+2)d(K_{\min}) = \operatorname{div}(\operatorname{Ric}_{g}) = \frac{1}{2} d \sca_{g} = \frac{n(n+2)}{2}d K_{min}
	\end{equation*}
	Since $n\ge 4$, it follows that $d K_{\min} =0$ on $U$. So $K_{\min}$ is constant on each connected component of $U$. Fix a connected component $U_{0}$ of $U$, and write $K_{\min}\equiv \alpha_{0} >0$ on $U_{0}$. Let $p_{i}\in U_{0}$ be a sequence of points converging to $p$, taking limit in \eqref{eq:Endpoint01} gives
	\begin{equation*}
		\mathrm{Rm}_{p} = \alpha_{0}\left( I_{p} + (E_{g, J})_{p} \right) \ne 0,
	\end{equation*}
	Therefore $p\in U$. Since $U_0$ is a connected component of $U$, it is closed in $U$, therefore $p\in U_0$. Hence $U_0$ is a closed. Since $M$ is assumed to be connected, $U_0=M$. Consequently, $M$ is a complete K\"{a}hler manifold of constant holomorphic sectional curvature $4K_{\min}$, it is follows that $M$ is isometric to a scaled Fubini-Study $\mathbb{CP}^{\ell}$ by \cite{Haw1953}. 
\end{proof}

\begin{proof}[Proof of \autoref{cor:Berger}]
	For $n=5$, $\beta_{5}=10/19$. The strict hypothesis implies $S_{0}>0$ and $K_{\min}>0$ as in the proof of \autoref{thm:Main}. Therefore $M$ has positive sectional curvature. By Synge's theorem $M^{5}$ is orientable \cite{Syn1936} therefore $b_0=b_5=1$. The Bochner formula for one-forms and $\operatorname{Ric}>0$ gives $b_{1}(M)=0$. \autoref{thm:Main} gives $b_{2}=b_{3}=0$, and Poincar\'{e} duality gives $b_{4}=b_1=0$. Therefore $M$ is a rational homology sphere.
\end{proof}	

\printbibliography
\end{document}